\tikzset{
  closed/.style = {decoration = {markings, mark = at position 0.5 with { \node[transform shape, xscale = .8, yscale=.4] {/}; } }, postaction = {decorate} },
  open/.style = {decoration = {markings, mark = at position 0.5 with { \node[transform shape, scale = .7] {$\circ$}; } }, postaction = {decorate} }
}
\newcommand{\ns}{\text{ns}}
\newcommand{\ubal}{\text{ub}}
\title{When are splitting loci Gorenstein?}
\author{Feiyang Lin}
\begin{document}

\begin{abstract}
Splitting loci are certain natural closed substacks of the stack of vector bundles on $\bP^1$, which have found interesting applications in the Brill-Noether theory of $k$-gonal curves. In this paper, we completely characterize when splitting loci, as algebraic stacks, are Gorenstein or $\Q$-Gorenstein. The main ingredients of the proof are a computation of the class groups of splitting loci in certain affine extension spaces, and a formula for the class of their canonical module.
\end{abstract}

\maketitle
\tableofcontents

\section{Introduction}
Let $\cB_{r,d}$ be the stack of rank $r$, degree $d$ vector bundles on $\bP^1$. For every tuple of integers $\vec{e} = (e_1, \dots, e_r)$ with $e_1 \leq e_2 \leq \cdots \leq e_r$ and $e_1 + \cdots + e_r = d$, there is a locally closed substack $\SplittOpen \subseteq \cB_{r,d}$ parametrizing families of vector bundles on $\bP^1$ that fiberwise are isomorphic to $\cO(\vec{e})$. The closure of $\SplittOpen$ is a closed substack $\Splitt \subseteq \cB_{r,d}$, the \textit{splitting locus} associated to $\vec{e}$, parametrizing vector bundles on $\bP^1$ that split as $\cO(\vec{e}) = \cO(e_1)\oplus \cdots \oplus \cO(e_r)$ or a specialization of $\cO(\vec{e})$. While $\SplittOpen$ is smooth, the closure $\Splitt$ can be singular. In this paper, we completely characterize when splitting loci $\Splitt$ are Gorenstein or $\Q$-Gorenstein. Our universal statement extends to splitting loci in every scheme that maps smoothly to $\cB_{r,d}$ (i.e. if the Kodaira-Spencer map is surjective at every point). For example, our results exhibit the corresponding Hurwitz-Brill-Noether loci as having at most Gorenstein or $\Q$-Gorenstein singularities (see \cite{Larson2021} for more details). In previous work on the singularities of splitting loci, it has been shown that splitting loci are normal and Cohen-Macaulay \cite{LLV}, and that certain tame splitting loci have rational singularities \cite{Lin2025}.

We say that $\vec{e}$ is a \textit{block arithmetic progression} of block size $s$ and difference $t$ if it is a shift of the tuple $(0^s, t^s, \dots, ((m-1)t)^s)$ ($t^s$ is a short hand for $s$ copies of the number $t$). As usual, if $s = 1$, we say that the tuple is an \textit{arithmetic progression}. We say that $\vec{e}$ is \textit{contiguous} if 
$\vec{e}$ is a shift of a tuple of the form $(\dots, 0^a, 1^b, 2^a, 3^b, \dots)$ for some positive integers $a, b$.
\begin{example}
	\begin{enumerate}
		\item $\vec{e} = (-1,0,0,0,1)$ and $\vec{e} = (0,1,1,1,2,3,3,3)$ are contiguous.
		\item $\vec{e} = (-3,-3,0,0,3,3)$ is a block arithmetic progression of block size $2$ and difference $3$.
	\end{enumerate}
\end{example}

We say that a Cohen-Macaulay scheme $X$ is $N$-Gorenstein if $\omega_X^{\otimes N}$ is a line bundle. Being Gorenstein and $N$-Gorenstein are smooth-local properties (see Section \ref{sec:sm_local}), so it makes sense to ask when an algebraic stack, such as a splitting locus $\Splitt$, is Gorenstein or $N$-Gorenstein. We say that an algebraic stack is $\Q$-Gorenstein if it is $N$-Gorenstein for some $N \geq 1$. 

Our main theorem is the following.
\begin{theorem} \label{thm:main}
A splitting locus $\Splitt$ is Gorenstein if and only if $\vec{e}$ is a block arithmetic progression of difference $0, 1$ or $2$, or if $\vec{e}$ is contiguous. $\Splitt$ is $\Q$-Gorenstein (but not Gorenstein) if and only if it is an arithmetic progression of difference $t \geq 3$. In this case, the smallest $N$ for which $\Splitt$ is $N$-Gorenstein is $t$ if $t$ is odd, and $t/2$ if $t$ is even.
\end{theorem}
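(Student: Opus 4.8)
The plan is to reduce the Gorenstein/$N$-Gorenstein question on the stack $\Splitt$ to an explicit divisor-class computation on affine local models, and then carry out that computation combinatorially.

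\textbf{Reduction to affine extension spaces.} Since being Gorenstein and $N$-Gorenstein are smooth-local (Section~\ref{sec:sm_local}), I will test them on the standard smooth atlas of $\cB_{r,d}$: near $[\cO(\vec f)]$ the stack is $[\,\mathrm{Ext}^1(\cO(\vec f),\cO(\vec f))\,/\,\underline{\mathrm{Aut}}(\cO(\vec f))\,]$, and $\mathrm{Ext}^1(\cO(\vec f),\cO(\vec f)) = H^1(\bP^1,\mathcal{E}nd\,\cO(\vec f))$ is an affine space (deformation theory on a curve is unobstructed). Pulling $\Splitt$ back along $\mathrm{Ext}^1(\cO(\vec f),\cO(\vec f))\to\cB_{r,d}$ gives a closed subvariety $\Sigma^{\vec f}\subseteq \mathrm{Ext}^1(\cO(\vec f),\cO(\vec f))$ — the locus of extensions whose splitting type specializes $\cO(\vec e)$ — which is nonempty exactly when $\vec f\preceq\vec e$, is normal and Cohen--Macaulay by \cite{LLV}, and is a cone under the scaling action on $\mathrm{Ext}^1(\cO(\vec f),\cO(\vec f))$, with origin $[\cO(\vec f)]$ the unique fixed point; hence $\mathrm{Pic}(\Sigma^{\vec f})=0$. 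Therefore $\Splitt$ is $N$-Gorenstein iff $N\,[\omega_{\Sigma^{\vec f}}]=0$ in $\mathrm{Cl}(\Sigma^{\vec f})$ for every $\vec f\preceq\vec e$. The étale-local self-similarity of splitting loci — near a point parametrizing a bundle of type $\vec g$ with $\vec f\preceq\vec g\preceq\vec e$ one has $\Sigma^{\vec f}\cong\Sigma^{\vec g}\times\mathbb{A}^c$ — then shows that the Gorenstein index of $\Splitt$, when finite, is the least common multiple of the orders of the classes $[\omega_{\Sigma^{\vec f}}]$, and that $\Splitt$ fails to be $\Q$-Gorenstein exactly when some $[\omega_{\Sigma^{\vec f}}]$ is non-torsion (or these orders are unbounded).

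\textbf{Class groups, the canonical class, and the key example.} The simplest non-trivial case already fixes the numerology: for $\vec e=(a,a+t)$ and its first non-split specialization $\vec f=(a-1,a+t+1)$, a \v Cech computation identifies $\Sigma^{\vec f}$ with the rank-$\le 1$ locus of the Hankel matrix $\bigl(\begin{smallmatrix}x_0&x_1&\cdots&x_{t-1}\\ x_1&x_2&\cdots&x_t\end{smallmatrix}\bigr)$, i.e.\ the affine cone over the degree-$t$ rational normal curve, equivalently the cyclic quotient singularity $\tfrac1t(1,1)$; here $\mathrm{Cl}\cong\mathbb{Z}/t\mathbb{Z}$ and $[\omega]$ has order $t/\gcd(2,t)$, which is precisely the index ($t$ if $t$ odd, $t/2$ if $t$ even) in the theorem. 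In general I would compute $\mathrm{Cl}(\Sigma^{\vec f})$ from a resolution of singularities of $\Sigma^{\vec f}$ (a vector bundle over a product of flag varieties, as in the analysis behind \cite{LLV}), as the cokernel of the classes of the exceptional divisors — or equivalently from the excision sequence relating $\mathrm{Cl}(\Sigma^{\vec f})$ to $\mathrm{Pic}$ of the smooth open stratum (bundles of type exactly $\vec e$) and to the codimension-one boundary strata — and obtain $[\omega_{\Sigma^{\vec f}}]$ from the canonical bundle of the resolution by adjunction and pushforward. The output is an explicit presentation of $\mathrm{Cl}(\Sigma^{\vec f})$ and a closed formula for $[\omega_{\Sigma^{\vec f}}]$ in terms of the combinatorics of $(\vec e,\vec f)$: the blocks of equal entries of $\vec f$ and the ``gaps'' crossed in degenerating $\cO(\vec e)$ to $\cO(\vec f)$.

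\textbf{Casework.} It remains to read off, over all $\vec f\preceq\vec e$, when $[\omega_{\Sigma^{\vec f}}]$ always vanishes, is always torsion of bounded order, or is sometimes non-torsion. For the Gorenstein direction: when $\vec e$ is contiguous all gaps are $1$, and every local model turns out to be divisorial or a square-matrix determinantal variety (hence Gorenstein), so $[\omega_{\Sigma^{\vec f}}]=0$; when $\vec e$ is a block arithmetic progression of difference $0,1,2$ the relevant determinantal local models are balanced and again $[\omega_{\Sigma^{\vec f}}]=0$. For the $\Q$-Gorenstein-but-not-Gorenstein case one checks that for an arithmetic progression of difference $t\ge 3$ the worst local models have canonical index $t/\gcd(2,t)$ (generalizing the $\tfrac1t(1,1)$ above), independently of the number of terms. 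For the negative direction — every $\vec e$ outside the list is not $\Q$-Gorenstein — one must exhibit, uniformly in such $\vec e$, a specialization $\vec f\preceq\vec e$ whose transverse singularity is an affine cone over a non-square Segre product (or a product of such with a rational normal cone), for which $[\omega_{\Sigma^{\vec f}}]$ has nonzero image in a free summand of $\mathrm{Cl}(\Sigma^{\vec f})$ and is therefore non-torsion.

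\textbf{Main obstacle.} The formal machinery above is routine once the class-group/canonical-class formula is in hand; the real work is obtaining that formula uniformly in $(\vec e,\vec f)$ — in particular tracking how the exceptional divisors of the resolution interact, which is where any torsion in $\mathrm{Cl}(\Sigma^{\vec f})$ comes from — and the combinatorics of the casework. I expect the hardest single step to be the negative direction: producing the right ``bad'' specialization $\vec f$ for every $\vec e$ not on the list and proving its canonical class is genuinely non-torsion, i.e.\ an honest lower bound rather than a single sample computation.
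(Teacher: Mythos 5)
Your overall architecture is parallel to the paper's: reduce by smooth-localness to affine models $H^1\cEnd(\cO(\vec f))$, observe that the splitting loci there are cones so that being ($N$-)Gorenstein becomes the vanishing of $N[\omega]$ in the class group, compute the class group and the class of the canonical module, and finish with combinatorics. Your rank-two sample computation (the cone over the degree-$t$ rational normal curve, canonical index $t/\gcd(2,t)$) is correct and does explain the numerology. But the proposal defers exactly the two steps that constitute the proof: the ``explicit presentation of $\mathrm{Cl}$ and closed formula for $[\omega]$'' is announced as output, never derived. These are the paper's Theorem \ref{thm:excision} (obtained by tracking $A^1$ through the explicit construction of $H^1\cEnd(\cO(\pvec{e}'))$ as a $GL_{r-1}$-bundle over an open subset of a vector bundle over $\cB_{r,d}$, computing the classes of the two excised divisors $Z_{\ns}$, $Z_{\ubal}$ and the framing relation $\alpha_2$) and Theorem \ref{prop:classCanonical} (an adjunction computation with the normal bundle $R^1\pi_*\cEnd_+(\cE)$ of the smooth locus $\bZ_{\vec{e}}$). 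Without these the casework cannot even be set up. Note also that checking all $\vec f \preceq \vec e$ is both harder and unnecessary: the paper uses the single model $\pvec{e}' = ((-M)^{r-1},D)$ with $\pvec{e}' < \vec e$, which covers all of $\Splitt$ at once and carries a standard $\bbG_m$-grading (a general $\vec f$ gives only a multigrading), so one excision sequence suffices.

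The more serious gap is that the structural claims your casework leans on are unjustified and mostly false. Outside rank two the local models of splitting loci are not determinantal varieties, so ``square-matrix determinantal, hence Gorenstein'' is not available for contiguous $\vec e$ or for block arithmetic progressions; and for the negative direction there is no reason the transverse singularity of a bad $\vec e$ is an affine cone over a Segre product, so your proposed lower bound has no foothold. In the paper the negative direction is not geometric at all: $A^1(\Splitt)$ is free on explicit classes $a_1^{(i)}, b_2^{(i)}$, the kernel of $A^1(\Splitt) \to A^1(\splitt)$ is the explicit rank-two lattice $\langle \alpha_1, \alpha_2\rangle$, and failure of $\Q$-Gorenstein-ness is precisely the statement that $(\vec{\delta} \mid -\vec{s})$ is non-torsion modulo that lattice --- a piece of integer linear algebra on $\Delta(\vec{\delta})$ versus $\Delta(\vec{f})$, uniform in $\vec e$. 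As it stands your write-up is a plausible plan whose hard content (the two class computations and the combinatorial classification) is missing, and whose proposed substitutes for that content would not go through.
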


To prove Theorem \ref{thm:main}, we first exhibit a basis \[a_1^{(1)}, \dots, a_1^{(\ell)}, b_2^{(1)}, \dots, b_2^{(\ell)}\] for $A^1(\Splitt)$ in Proposition \ref{prop:basisForA1}. The $a_1^{(i)}$'s lift generators for $A^1(\SplittOpen)$. Setting $\vec{e} = (f_1^{s_1}, \dots, f_\ell^{s_\ell})$, let $\pvec{e}^{(i)}$ be defined by replacing $f_i^{s_i}$ in $\vec{e}$ with $(f_i - 1, f_i^{s_i - 2}, f_i+1)$. Then the $b_2^{(i)}$'s are given by 
\[
	b_2^{(i)} = 
	\begin{cases}
		[\mathbf{\overline{\Sigma}}_{\pvec{e}^{(i)}}] & \text{if } f_{i-1}+1 < f_i < f_{i+1}- 1 \text{ and } s_i \geq 2, \\ 
		0 & \text{otherwise}.
	\end{cases}
\]
Note that $\mathbf{\overline{\Sigma}}_{\pvec{e}^{(i)}}$ is of codimension $1$ in $\Splitt$. (When we say basis, we mean to include only those nonzero $b_2^{(i)}$'s.) In terms of these universal classes, we prove an adjunction-type formula for the class of the canonical module for splitting loci.

\begin{theorem}[Theorem \ref{prop:classCanonical}] \label{thm:classCanonical}
Let $X$ be a scheme, and let $\cE$ be a rank $r$, fiberwise degree $d$ vector bundle on $X \times \bP^1$, inducing a smooth map $X \to \cB_{r,d}$ (so that $X$ is necessarily smooth). Let $\vec{e} = (f_1^{s_1}, \dots, f_\ell^{s_\ell})$, where $f_1 < \cdots < f_\ell$, and let $\splitt(X)$ denote the splitting locus corresponding to $\vec{e}$ in $X$. Write $\delta_i = \sum_{j < i} s_j - \sum_{j > i} s_j$. Then
\[
	[\omega_{\splitt(X)}] = [\omega_X|_{\splitt(X)}] + \sum_{1 \leq i \leq \ell}\left ( (\deg \vec{e} - rf_i + \delta_i)a_1^{(i)} + (r-s_i)b_2^{(i)} \right),
\] 
where the $a_1^{(i)}, b_2^{(i)}$ classes denote the pullbacks of the universal classes on $\Splitt$ to $\splitt(X)$. 
\end{theorem}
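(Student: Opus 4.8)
The plan is to reduce to a universal identity in $A^1(\Splitt)$ and then determine the two families of coefficients by separate arguments: the $a_1^{(i)}$-coefficients by adjunction along the smooth open stratum $\SplittOpen$, and the $b_2^{(i)}$-coefficients from the geometry of a resolution of $\Splitt$ along the codimension-one boundary divisors $\mathbf{\overline{\Sigma}}_{\pvec{e}^{(i)}}$.

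\textbf{Step 1: reduction to the universal case.} Since $\cB_{r,d}$ is smooth and the map $g\colon X\to\cB_{r,d}$ induced by $\cE$ is smooth, $\splitt(X)=X\times_{\cB_{r,d}}\Splitt$ and the induced map $\bar g\colon\splitt(X)\to\Splitt$ is smooth. Both $\splitt(X)\subseteq X$ and $\Splitt\subseteq\cB_{r,d}$ are Cohen--Macaulay closed substacks of the same codimension (\cite{LLV}), so their dualizing sheaves are compatible with this smooth base change; combining this with $\omega_X\cong\omega_{X/\cB_{r,d}}\otimes g^*\omega_{\cB_{r,d}}$ gives $[\omega_{\splitt(X)}]-[\omega_X|_{\splitt(X)}]=\bar g^*\bigl([\omega_\Splitt]-[\omega_{\cB_{r,d}}|_\Splitt]\bigr)$ in $A^1(\splitt(X))$. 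As the $a_1^{(i)},b_2^{(i)}$ on $\splitt(X)$ are by hypothesis pulled back from $\Splitt$, it therefore suffices to prove
\[
  [\omega_\Splitt]-[\omega_{\cB_{r,d}}|_\Splitt]=\sum_{i}\Bigl((\deg\vec e-rf_i+\delta_i)\,a_1^{(i)}+(r-s_i)\,b_2^{(i)}\Bigr)\quad\text{in }A^1(\Splitt).
\]

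\textbf{Step 2: the $a_1^{(i)}$-coefficients.} Restrict to $\SplittOpen$, which is open in $\Splitt$, smooth, and regularly immersed in $\cB_{r,d}$ with normal bundle the $\operatorname{Aut}(\cO(\vec e))$-representation $N=\operatorname{Ext}^1(\cO(\vec e),\cO(\vec e))=\bigoplus_{p,q}H^1(\bP^1,\cO(f_q-f_p))\otimes\operatorname{Hom}(k^{s_p},k^{s_q})$. By adjunction, $\bigl([\omega_\Splitt]-[\omega_{\cB_{r,d}}|_\Splitt]\bigr)\big|_{\SplittOpen}=[\det N]$. Using the identification from Proposition \ref{prop:basisForA1} of $a_1^{(i)}|_{\SplittOpen}$ with the determinant character of the $i$-th graded piece of the universal bundle, a direct computation shows $\det N$ has $a_1^{(i)}|_{\SplittOpen}$-coefficient $\sum_p\max(0,f_p-f_i-1)s_p-\sum_q\max(0,f_i-f_q-1)s_q$, which simplifies to $\deg\vec e-rf_i+\delta_i$. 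Hence $[\omega_\Splitt]-[\omega_{\cB_{r,d}}|_\Splitt]-\sum_i(\deg\vec e-rf_i+\delta_i)a_1^{(i)}$ restricts to $0$ on $\SplittOpen$, so by Proposition \ref{prop:basisForA1} (the $a_1^{(i)}$ lift a generating set of $A^1(\SplittOpen)$, and the nonzero $b_2^{(i)}$ form a basis of the kernel of $A^1(\Splitt)\to A^1(\SplittOpen)$) it equals $\sum_i c_i\,b_2^{(i)}$ for some integers $c_i$; it remains to prove $c_i=r-s_i$ whenever $b_2^{(i)}\neq0$.

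\textbf{Step 3: the $b_2^{(i)}$-coefficients.} Let $\rho\colon\widetilde\Splitt\to\Splitt$ be the Kempf--Lascoux--Weyman-type resolution underlying the affine extension space analysis (of the sort used for normality and Cohen--Macaulayness in \cite{LLV} and for rational singularities in \cite{Lin2025}): $\widetilde\Splitt$ is the total space of a vector bundle over an iterated flag bundle over the relevant smooth ambient space, so $\omega_{\widetilde\Splitt}$ is given by the standard formula as the pullback of the canonical bundle of the flag bundle, twisted by the determinants of the tautological sub/quotient bundles and of the vector bundle. Since $\Splitt$ is normal, $K_{\widetilde\Splitt}$ differs from the strict transform of a canonical divisor of $\Splitt$ by a $\rho$-exceptional divisor, hence $[\omega_\Splitt]=\rho_*[\omega_{\widetilde\Splitt}]$ unconditionally. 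Applying $\rho_*$ to the explicit expression for $[\omega_{\widetilde\Splitt}]$ — and, to pin down the coefficient along each $\mathbf{\overline{\Sigma}}_{\pvec{e}^{(i)}}$, using the affine extension space model of $\Splitt$ near the generic point of that divisor, where $\Splitt$ is a transverse determinantal slice realizing the balancing move replacing $(f_i-1,f_i+1)$ by $(f_i,f_i)$ — one reads off $c_i=r-s_i=\sum_{j\neq i}s_j$.

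\textbf{The main obstacle.} I expect Step 3 to be the crux. One must express the ranks of the bundles in the determinantal presentation of $\Splitt$ in terms of $\vec e$ and $\pvec{e}^{(i)}$ (these are controlled by which $H^1$ of twists of $\cO(\pvec{e}^{(i)})$ jump, hence by the combinatorics of $\vec e$), track how the numerous tautological divisor classes on the iterated flag bundle — which together encode the entire splitting-type stratification — push forward to combinations of the $a_1^{(i)}$ and $b_2^{(i)}$, check as a consistency test that the $a_1$-part so obtained matches Step 2, and isolate the coefficient $r-s_i$ with the correct sign. Positive characteristic will also need care: the vanishing that makes the resolution well-behaved is more delicate there, and a direct transverse-slice computation (via the Bruns--Vetter description of canonical modules of determinantal rings) may be needed in place of the resolution. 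Assembling Steps 1--3 and invoking the reduction gives the stated formula for arbitrary $X$.
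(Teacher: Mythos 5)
Your Steps 1 and 2 are sound and agree with what the paper does on the open stratum: the $a_1^{(i)}$-coefficients do come from adjunction with normal bundle $\Ext^1(\cO(\vec e),\cO(\vec e))$, and your claimed coefficient $\sum_p \max(0,f_p-f_i-1)s_p-\sum_q\max(0,f_i-f_q-1)s_q=\deg\vec e-rf_i+\delta_i$ checks out. The genuine gap is Step 3, which you yourself identify as the crux and do not carry out. Worse, the route you propose misdiagnoses the geometry: there is no singularity to resolve, no discrepancy to control, and no pushforward of tautological classes from a Kempf collapsing to compute along the divisors $\mathbf{\overline{\Sigma}}_{\pvec{e}^{(i)}}$. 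The flag-Quot resolution $\SplittNye\to\Splitt$ is an isomorphism over the entire union $\bZ_{\vec e}$ of $\SplittOpen$ with the codimension-one strata (the only admissible flags on $\cO(\pvec{e}^{(i)})$ are coarsenings of its Harder--Narasimhan filtration), so $\Splitt$ is already smooth at the generic point of each such divisor. Your proposed ``transverse determinantal slice'' there is a smooth point, and the Bruns--Vetter local model has nothing to bite on.

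What actually pins down $c_i=r-s_i$ is simply the extension of your Step 2 adjunction over the codimension-one strata, using Proposition \ref{prop:normalBundle}: the normal bundle of $\bZ_{\vec e}$ is $R^1\pi_*\cEnd_+(\cE)$, filtered with associated graded $\oplus_{j<i}R^1\pi_*\cHom(\cN_i,\cN_j)$, and over $\mathbf{\Sigma}_{\pvec{e}^{(i)}}$ the bundle $\cN_i$ is no longer pulled back from the base --- this jump is precisely where $b_2^{(i)}$ enters. A Grothendieck--Riemann--Roch computation (Lemma \ref{lem:GRRCalc}) gives $c_1(R^1\pi_*\cHom(\cN_i,\cN_j))=(f_i-f_j-1)(s_ia_1^{(j)}-s_ja_1^{(i)})+s_ib_2^{(j)}+s_jb_2^{(i)}$, the $b_2$-terms being isolated by excising $\Sigma_{\pvec{e}^{(i)}}$ and $\Sigma_{\pvec{e}^{(j)}}$ in turn and using that the $b_2^{(i)}$ are part of a free basis; summing over pairs yields both families of coefficients simultaneously. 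Without this computation, or an equally concrete substitute for your Step 3, the proof is incomplete.
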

Next, we consider a specific family of smooth covers of the stack $\vbst$ of rank $r$, degree $d$ vector bundles on $\bP^1$, namely the affine spaces $H^1 \cEnd (\cO(\pvec{e}'))$ where $\pvec{e}' = ((-M)^{r-1}, D)$, and $D$ is large enough so that $\pvec{e}' < \vec{e}$. Let $\splitt$ denote the splitting locus corresponding to the splitting type $\vec{e}$ inside $H^1 \cEnd (\cO(\pvec{e}'))$. As worked out in \cite[Section 2.2]{Lin2025}, for splitting types $\pvec{e}' = ((-M)^{r-1}, D)$, the affine space $H^1 \cEnd (\cO(\pvec{e}'))$ is a principal $GL_{r-1}$-bundle over an open substack of a vector bundle over $\vbst$. Using this interpretation of $H^1 \cEnd (\cO(\pvec{e}'))$, we prove the following: 

\begin{theorem} \label{thm:excision} Let $\pvec{e}' = ((-M)^{r-1}, D)$ and let $\vec{e} = (f_1^{s_1}, \dots, f_\ell^{s_\ell})$, where $f_1 < \cdots < f_\ell$, such that $\pvec{e}' < \vec{e}$. Then the class group of the splitting locus $\splitt \subseteq H^1 \cEnd (\cO(\pvec{e}'))$ fits in the following right exact sequence:
\begin{equation} \label{eq: excision}
	\Z^2 \to A^1(\Splitt) \to A^1(\splitt) \to 0,
\end{equation}
where the first map is given by sending the two basis elements to 
\begin{align*}
	\alpha_1 &= \sum_{i=1}^\ell \left (-(f_i+M)a_1^{(i)} + b_2^{(i)} \right ), \\ 
	\alpha_2 &= \sum_{i = 1}^\ell \left ((f_i+M+1)a_1^{(i)} -b_2^{(i)} \right ).
\end{align*}
\end{theorem}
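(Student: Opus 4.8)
The plan is to realize $\splitt \subseteq H^1 \cEnd(\cO(\pvec{e}'))$ as the complement of an explicit divisor inside (a torsor over an open substack of) a vector bundle over $\Splitt$, and then apply the excision sequence for Chow groups / class groups of algebraic stacks. Concretely, I would first recall from \cite[Section 2.2]{Lin2025} the description of $H^1\cEnd(\cO(\pvec{e}'))$ as a principal $GL_{r-1}$-bundle over an open substack $\cU$ of a vector bundle $p\colon V \to \vbst$. Pulling back along $p$ and then along the smooth $GL_{r-1}$-torsor, the splitting locus $\Splitt \subseteq \vbst$ gives rise to a closed substack whose open locus is $\splitt$; the key point is to identify the boundary $Z$ that must be removed and show it is pure of codimension $1$ in the total space. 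Since $A^1$ is a smooth-local (hence compatible with vector bundle pullback and torsors for a special group) invariant, $A^1(\Splitt) \xrightarrow{\sim} A^1(\cU\text{-pullback})$, and excision gives the right exact sequence $\bigoplus_Z \Z \cdot [Z] \to A^1(\Splitt) \to A^1(\splitt) \to 0$ once we know the irreducible components of $Z$.

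The second step is to enumerate those boundary components. The locus being removed to pass from the vector bundle $V$ (restricted over $\Splitt$) down to $\splitt$ corresponds to the failure of the condition $\pvec{e}' < $ (the splitting type of the extension), i.e. the jump loci where the induced map $\cO(\pvec{e}')^\vee \to \cE$ degenerates; from the $((-M)^{r-1}, D)$ shape one expects exactly two such codimension-$1$ conditions, matching the $\Z^2$ in \eqref{eq: excision}. I would pin these down as the two "first-order degenerations'': the locus where the section in the $\cO(D)$-direction vanishes to extra order (controlling the class $\alpha_2$) and the locus where the $\cO(-M)^{r-1}$-part drops rank (controlling $\alpha_1$). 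One must check each is irreducible of codimension $1$ and compute its class in $A^1(\Splitt)$ using the basis $a_1^{(i)}, b_2^{(i)}$ from Proposition \ref{prop:basisForA1}.

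The third and most computational step is to evaluate $[Z_1]$ and $[Z_2]$ in terms of the basis, i.e. to verify that they equal $\alpha_1$ and $\alpha_2$ as written. Here I would use the formula for the class of the canonical module (Theorem \ref{thm:classCanonical}) together with the adjunction/excess-intersection behaviour of these jump loci, or more directly a Chern-class computation for the tautological bundles on the extension space: the coefficients $-(f_i+M)$ and $(f_i+M+1)$ are exactly the kind of linear-in-$f_i$ expressions that arise from $\chi$ or $h^0$ of twists of $\cO(f_i)$ against $\cO(M)$ and $\cO(-M-1)$, which strongly suggests that $[Z_1], [Z_2]$ are read off from the vanishing/degeneracy loci of maps between bundles whose fibers are $H^0(\cO(f_i + M))$ and $H^0(\cO(f_i + M + 1))^\vee$ summed over the splitting summands. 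The coefficient $+1$ (resp.\ $-1$) on the $b_2^{(i)}$ terms I would obtain by restricting to the generic point of the divisor $\mathbf{\overline{\Sigma}}_{\pvec{e}^{(i)}}$ and computing the order of vanishing there.

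The main obstacle I anticipate is the bookkeeping in step three: correctly matching the two geometric boundary divisors with the two algebraic classes $\alpha_1, \alpha_2$, and nailing the exact linear coefficients (the shift by $M$, and the $+1$ versus $0$) rather than just their leading behaviour. A secondary subtlety is checking exactness on the left is \emph{not} claimed — only right-exactness — so I do not need to worry about whether $\alpha_1, \alpha_2$ are independent or have relations; it suffices to show (i) $\splitt$ is the complement of $Z_1 \cup Z_2$ in a space with class group $A^1(\Splitt)$, and (ii) $[Z_j]$ maps to $\alpha_j$. I would double-check the setup in the edge cases where some $b_2^{(i)} = 0$ (so that $\mathbf{\overline{\Sigma}}_{\pvec{e}^{(i)}}$ is not codimension $1$, or $s_i < 2$, or $f_i$ is adjacent to $f_{i\pm1}$), to confirm the stated formulas for $\alpha_1, \alpha_2$ still hold with those terms simply dropped.
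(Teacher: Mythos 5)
Your overall framework --- building $H^1\cEnd(\cO(\pvec{e}'))$ from $\vbst$ via an open substack, a vector bundle, and a $GL_{r-1}$-torsor, and then tracking $A^1$ through excision --- is the same as the paper's. But your identification of where the two relations come from is incorrect, and this would derail your step three. You posit two excised boundary divisors inside the vector bundle $V = \pi_*\cHom(\cE,\cO(D))$, one producing $\alpha_1$ and one producing $\alpha_2$. In fact only one divisor is ever excised at that stage: the non-surjectivity locus $Z_{\ns}$ of the universal map $\cE \to \cO(D)$ is the image of the zero locus of a rank-$r$ bundle on $V \times \bP^1$ and hence has codimension $r-1$, so it is a divisor only when $r=2$; while the unbalanced-kernel locus $Z_{\ubal}$ is empty when $r=2$ and is a divisor exactly when $r\geq 3$. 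In either case the single excised divisor has class $\alpha_1$. The relation $\alpha_2$ does not come from this excision at all: it comes from the final framing step, since $\Isom(\cO(-M)^{r-1},\cK)$ is the complement of the determinantal divisor in $\pi_*\cHom(\cK,\cO(-M)^{\oplus(r-1)})$, which kills $c_1(\pi_*\cK(M)) = \sum_i\left((f_i+M+1)a_1^{(i)} - b_2^{(i)}\right) = \alpha_2$. Your plan never accounts for the torsor contributing a relation, so you would either come up one class short or, by forcing two divisors out of the vector-bundle step, compute the wrong classes. (Your guess that ``vanishing to extra order in the $\cO(D)$-direction'' controls $\alpha_2$ is the giveaway: that locus is $Z_{\ns}$, it controls $\alpha_1$, and only when $r=2$.)

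Two further gaps. First, the class computations are not adjunction or excess-intersection arguments and do not use Theorem \ref{thm:classCanonical}: the paper exhibits $Z_{\ubal}$ as the support of $R^1\pi_*\cK(M-1)$, resolves this sheaf via the Str\o mme sequence to see that $Z_{\ubal}$ is cut out by a section of a line bundle (hence pure of codimension one), and evaluates $(S-M)c_1(\pi_*\cK(S)) - (S-M+1)c_1(\pi_*\cK(S-1))$ by the Grothendieck--Riemann--Roch computation of Lemma \ref{lem:GRRCalc}; the same lemma gives $[Z_{\ns}]$ when $r=2$ and $c_1(\pi_*\cK(M))$. Second, the irreducibility of $Z_{\ubal}$, which you flag but do not address, is genuinely needed so that excision introduces only the single relation $\alpha_1$ rather than one relation per component; this is the longest argument in the paper's proof, stratifying $V\setminus Z_{\ns}$ by the splitting types of $\cK$ and $\cE$ and using the Hong--Larson criterion for injections with locally free cokernel to show that only the minimal unbalancing of $((-M)^{r-1})$ reaches codimension one.
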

With some computation, the previous two theorems together tell us exactly when the class of the canonical module in $H^1\cEnd(\cO(\pvec{e}'))$ is trivial or $N$-torsion. This turns out to be enough. The affine spaces $H^1\cEnd(\cO(\pvec{e}'))$ in general admit an action by $\bbG_m^{k-1}$, where $k$ is the number of perfectly balanced blocks in $\pvec{e}'$, which makes the ideal for splitting loci therein $\Z^{k-1}$-graded (Section \ref{sec:grading}). When $\pvec{e}'$ has the special shape of $((-M)^{r-1}, D)$, $H^1\cEnd(\cO(\pvec{e}'))$ admits a natural $\bbG_m$-action which makes the canonical module for $\splitt$ standard-graded. As such, $\omega_{\splitt}^{\otimes N}$ is a line bundle if and only if we have $N[\omega_{\splitt}] = 0$ as a class in $A^1(\splitt)$ (Proposition \ref{prop:graded}).
\begin{remark} 
\begin{enumerate}
	\item Note that the image of the first map in \eqref{eq: excision} is generated by the classes $\alpha_1 + \alpha_2 = \sum_{i = 1}^\ell a_1^{(i)}$ and $\alpha_2 - (M+1)(\alpha_1+\alpha_2) = \sum_{i = 1}^\ell (f_i a_1^{(i)} - b_2^{(i)})$, and in particular does not depend on $M$. One can check that these are precisely the classes that arise from restricting classes in $A^1(\cB_{r,d})$. A priori, these classes must pull back to zero on $\splitt$, since we can also factor the pullback through the affine space $H^1 \cEnd \cO(\pvec{e}')$, which has trivial class group. Our computation tells us that these are the only classes that become zero.
	\item When $\vec{e}$ is rank $2$, the ideals for splitting loci in $H^1 \cEnd\cO(\pvec{e}')$ are precisely Hankel determinantal ideals, as explained in \cite[Corollary 5.4]{Eisenbud-Schreyer}. Thus our results recover the computation in \cite{CONCA2018111} for the class groups of Hankel determinantal rings and the classes of the canonical modules therein. Our previous remark gives an explanation for why the class group of Hankel determinantal rings only depends on the difference between the number of rows and columns of the Hankel matrix. 
	\item Our investigation is partly inspired by the results in the case of determinantal varieties, warranting a brief comparison here. Over a field, the class group of the determinantal variety of $m$-by-$n$ matrices of rank at most $r > 0$ is $\Z$. It is freely generated by the class of one of two distinguished prime ideals $P, Q$, satisfying $[P] = -[Q]$, and the class of the canonical module is $m[P]+n[Q]$. In particular, the determinantal variety is Gorenstein (equivalently $\Q$-Gorenstein) precisely when the matrices are square, i.e. $m = n$ \cite[Theorem 8.8]{Bruns-Vetter}.
	\item Our work inches us one step closer to the question of whether $\Splitt$ has rational singularities when $\vec{e} = (-2,0,2)$, which is the first non-tame case. Since $\vec{e} = (-2,0,2)$ is Gorenstein, it now suffices to check that $\Splitt$ is log terminal. This seems out of reach to us, since our intersection-theoretic techniques are agnostic of any behavior in codimension $2$ and beyond, which one would need a better handle of in order to compute the discrepancies.
\end{enumerate}
\end{remark}

\noindent
{\textbf{Acknowledgements.}} 
Thank you to my advisors Hannah Larson and David Eisenbud for many helpful conversations. Thanks also to Martin Olsson for discussions about Proposition \ref{prop:normalBundle}. I made use of some Macaulay2 code written by Frank-Olaf Schreyer to observe that certain splitting types were Gorenstein. Part of this work was conducted while the author was supported by NSF Grant DMS 2001649. 

\section{Preliminaries}
We refer the reader to \cite{larson_degeneracy} and \cite[Section 2]{Lin2025} for background on splitting loci.
\subsection{A Torus Action On \texorpdfstring{$H^1 \cEnd(\cO(\pvec{e}))$}{Successive Extension Spaces}} 
\label{sec:grading}

Let $\vec{e} = (f_1^{s_1}, \dots, f_\ell^{s_\ell})$. We shall explain that there is a natural $\bbG_m^{\ell-1}$-action on $H^1 \cEnd(\cO(\vec{e}))$ under which splitting loci are invariant. In particular, the ideals for splitting loci in $H^1 \cEnd(\cO(\vec{e}))$ are naturally $\Z^{\ell-1}$-multigraded.

As explained in \cite[Section 2.2]{Lin2025}, a $T$-point of the affine space $H^1 \cEnd(\cO(\vec{e}))$ is the data of vector bundles $\cK_1, \dots, \cK_{\ell}$ on $T \times \bP^1$, plus the data of an isomorphism $\cK_1 \cong \cO(f_1^{s_1})|_{T\times \bP^1}$, plus the data of extensions
\[
	0 \to \cK_{i-1} \xrightarrow{f_i} \cK_{i} \xrightarrow{g_i} \cO(f_i^{s_i}) \to 0
\]
for each $2 \leq i \leq \ell$. Note that we will often write $\cO(f_i^{s_i})$ to denote the pullback of this bundle along the second projection from $\bP^1$ to $T \times \bP^1$. In terms of these extensions, the action is easy to describe. An element $(\alpha_1, \dots, \alpha_{\ell-1}) \in \bbG_m^{\ell-1}$ acts on such a collection of data by keeping the vector bundles the same, but scaling the extensions to:
\[
	0 \to \cK_{i-1} \xrightarrow{f_i} \cK_{i} \xrightarrow{\beta_i g_i} \cO(f_i^{s_i}) \to 0,
\]
where $\beta_{i} = \alpha_1 \alpha_2 \cdots \alpha_{i-1}$. After fixing an isomorphism of $\cO(\vec{e}) \cong \oplus_{i = 1}^\ell \cO(f_i^{s_i})$, we can identify $H^1 \cEnd(\cO(\vec{e})) \cong \oplus_{1 \leq j < i \leq \ell} \Ext^1(\cO(f_i^{s_i}), \cO(f_j^{s_j}))$. We will now work out what the action is in these coordinates by considering the action on a $k$-point. 

\begin{proposition} The isomorphism $H^1 \cEnd(\cO(\vec{e})) \cong \oplus_{1 \leq j < i \leq \ell} \Ext^1(\cO(f_i^{s_i}), \cO(f_j^{s_j}))$ is a decomposition of $H^1 \cEnd(\cO(\vec{e})$ into weight spaces of the action of $\bG_m^{\ell-1}$. 
For $1 \leq j < i \leq \ell$, if $v_{ij} \in \Ext^1(\cO(f_i^{s_i}), \cO(f_j^{s_j}))$, and $\alpha = (\alpha_1, \dots, \alpha_{\ell-1}) \in \bG_m^{\ell-1}$, then $\alpha \cdot v_{ij} = \beta_j^{-1}\beta_i v_{ij} = \alpha_j \alpha_{j+1} \cdots \alpha_{i-1} v_{ij}$.
\end{proposition}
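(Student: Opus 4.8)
The plan is to reduce to an explicit computation on a $\bar k$-point, where the action is revealed to be the adjoint action of a block-scalar torus sitting inside $\mathrm{Aut}(\cO(\vec e))$ — in particular linear — and then to read off the weights. Since $H^1\cEnd(\cO(\vec e))$ is an affine space over the base field, the $\bG_m^{\ell-1}$-action is determined by its effect on $\bar k$-points, and checking that a given subspace decomposition is the weight-space decomposition can likewise be done on $\bar k$-points. So it suffices to fix $\alpha=(\alpha_1,\dots,\alpha_{\ell-1})$, compute its action on an arbitrary $\bar k$-point in the coordinates $\bigoplus_{1\le j<i\le\ell}\Ext^1(\cO(f_i^{s_i}),\cO(f_j^{s_j}))$, check it is the diagonal (hence linear) action with the stated scalars, and verify those scalars — the characters of $\bG_m^{\ell-1}$ — are pairwise distinct.

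I would then unwind the dictionary of \cite[Section 2.2]{Lin2025} between a $\bar k$-point given by extension data $(\cK_i,f_i,g_i)$ and a class $\xi\in H^1\cEnd(\cO(\vec e))=\bigoplus_{j<i}\Ext^1(\cO(f_i^{s_i}),\cO(f_j^{s_j}))$. The iterated extensions produce a bundle $\cK_\ell$ carrying the flag $\cK_1\subset\cdots\subset\cK_\ell$ together with, via the quotient maps $g_i$, an identification of its associated graded with $\bigoplus_i\cO(f_i^{s_i})$. Trivializing over the two standard charts $U_0,U_1$ of $\bP^1$ by local splittings of this flag compatible with the graded identification gives local isomorphisms $\sigma_a\colon\cO(\vec e)|_{U_a}\xrightarrow{\ \sim\ }\cK_\ell|_{U_a}$ whose transition automorphism $c:=\sigma_0^{-1}\sigma_1\in\mathrm{Aut}(\cO(\vec e)|_{U_0\cap U_1})$ is block-unipotent: it preserves the flag of $\cO(\vec e)$ and induces the identity on graded pieces. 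The class $\xi$ is the one represented by $\log c$, and its component in $\Ext^1(\cO(f_i^{s_i}),\cO(f_j^{s_j}))$ (with $j<i$) is the $(j,i)$-block of this $\cEnd(\cO(\vec e))$-valued cocycle.

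The crux is to track the effect of replacing each $g_i$ by $\beta_i g_i$ while keeping the $\cK_i$ and the $f_i$. This changes only the graded identification, rescaling its $i$-th factor by $\beta_i$; keeping the compatibility normalization of the local splittings then forces each $\sigma_a$ to be rescaled by a block-scalar automorphism, so that the new transition automorphism is the conjugate of $c$ by the block-scalar automorphism $D:=\mathrm{diag}(\beta_1^{-1}I,\dots,\beta_\ell^{-1}I)$ of $\cO(\vec e)$. Thus, on each $\bar k$-point the $\bG_m^{\ell-1}$-action is the adjoint action of $D\in\mathrm{Aut}(\cO(\vec e))$ on $H^1\cEnd(\cO(\vec e))$. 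Since $\mathrm{Ad}$ of $\mathrm{diag}(\gamma_1 I,\dots,\gamma_\ell I)$ scales $\cHom(\cO(f_i^{s_i}),\cO(f_j^{s_j}))$, and hence $H^1$ of it, by $\gamma_j\gamma_i^{-1}$, we obtain $\alpha\cdot v_{ij}=\beta_j^{-1}\beta_i\,v_{ij}=\alpha_j\alpha_{j+1}\cdots\alpha_{i-1}\,v_{ij}$. The action is therefore diagonal in these coordinates, and the characters $\alpha\mapsto\prod_{j\le k<i}\alpha_k$ are pairwise distinct because such a product determines the interval $\{j,\dots,i-1\}$ and hence the pair $(i,j)$; so each summand $\Ext^1(\cO(f_i^{s_i}),\cO(f_j^{s_j}))$ is a single weight space and the given decomposition is the full weight-space decomposition.

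The main obstacle is the bookkeeping in the third step: showing that the rescaling $g_i\mapsto\beta_i g_i$ really does amount to conjugating the transition cocycle by $D=\mathrm{diag}(\beta_1^{-1}I,\dots,\beta_\ell^{-1}I)$ and not by its inverse. This hinges on pinning down the normalization of the compatible local splittings in the dictionary of \cite{Lin2025} and the sign convention in the identification $H^1\cEnd(\cO(\vec e))\cong\bigoplus\Ext^1(\cO(f_i^{s_i}),\cO(f_j^{s_j}))$; once these are fixed the computation is a short matrix manipulation, and the remaining steps — the reduction to $\bar k$-points, the adjoint-action formula on $\cHom$-sheaves, and the distinctness of the characters — are routine.
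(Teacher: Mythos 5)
Your route is genuinely different from the paper's. The paper works entirely with the functorial extension data: it identifies $\Ext^1(\cO(f_i^{s_i}), K_{i-1})$ with $\bigoplus_{j<i}\Ext^1(\cO(f_i^{s_i}),\cO(f_j^{s_j}))$ one step at a time by splitting the sequences relating $\Ext^1(\cO(f_i^{s_i}),K_{t-2})$, $\Ext^1(\cO(f_i^{s_i}),K_{t-1})$ and $\Ext^1(\cO(f_i^{s_i}),\cO(f_{t-1}^{s_{t-1}}))$, and then chases the scaled extensions through the corresponding pushout diagrams by a double induction. You instead pass to a \v{C}ech/transition-matrix picture over the two standard charts and recognize the action as conjugation by a block-scalar automorphism of $\cO(\vec e)$, after which the weights are read off from the adjoint action on $\cHom(\cO(f_i^{s_i}),\cO(f_j^{s_j}))$. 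Your approach is more uniform (one conjugation computes all blocks at once, and the pairwise distinctness of the characters is a clean way to see the summands are exactly the weight spaces); the paper's avoids choosing trivializations and stays within the dictionary it actually set up. Two harmless caveats: $\log c$ needs the characteristic to exceed the nilpotency degree, but $c-I$ (or any conjugation-equivariant coordinate) serves equally well since $D(\cdot)D^{-1}$ scales every $(j,i)$-block identically; and your $\log c$ identification may differ from the paper's iterated-splitting identification in the far-off-diagonal blocks, which is immaterial here precisely because the characters are distinct.

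The one genuine gap is the step you yourself flag and then defer: whether rescaling $g_i\mapsto\beta_i g_i$ conjugates the cocycle by $D=\mathrm{diag}(\beta_1^{-1}I,\dots,\beta_\ell^{-1}I)$ or by $D^{-1}$. This is not mere bookkeeping --- it is the entire content of the proposition, since everything else in your argument is formal. Under the standard normalization (the class of $0\to A\to E\xrightarrow{g}B\to 0$ is the connecting image of $\mathrm{id}_B$, computed from local sections $s_a$ with $gs_a=\mathrm{id}$), replacing $g$ by $\lambda g$ replaces $s_a$ by $\lambda^{-1}s_a$ and hence multiplies the class by $\lambda^{-1}$; carried through your construction this gives $\sigma_a'=\sigma_a D$ and $c'=D^{-1}cD$, so the $(j,i)$-block scales by $\beta_j\beta_i^{-1}$ --- the inverse of the stated weight. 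So the ``short matrix manipulation'' must actually be performed against the specific dictionary of \cite[Section 2.2]{Lin2025} (which may well use the opposite normalization), rather than asserted. To be fair, the ambiguity only inverts the characters and does not change which subspaces are the weight spaces, so nothing downstream (homogeneity of the ideals, gradedness of $\omega_{\splitt}$) is affected; but as a proof of the displayed formula $\alpha\cdot v_{ij}=\alpha_j\cdots\alpha_{i-1}v_{ij}$ your write-up is incomplete at exactly its one non-routine point.
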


\begin{proof}
Let $K_1, \dots, K_\ell$ be the bundles involved in the data of a $k$-point of $H^1 \cEnd(\cO(\vec{e}))$, corresponding to the point 
$(v_{ij})_{1 \leq j < i \leq \ell} 
\in 
\oplus_{1 \leq j < i \leq \ell} 
\Ext^1(\cO(f_i^{s_i}), \cO(f_j^{s_j}))$. We will show that $\alpha = (\alpha_1, \dots, \alpha_{\ell-1}) \in \bG_m^{\ell-1}$ acts by $v_{ij} \mapsto \beta_j^{-1}\beta_i v_{ij}$.

To prove this, we proceed by induction. Suppose that we have shown that $\alpha \cdot v_{i'j} = \beta_{j}^{-1}\beta_{i'} v_{i'j}$ whenever $i' < i$.
By definition, $\alpha$ acts on $\Ext^1(\cO(f_i^{s_i}), K_{i-1})$ by scaling the extensions with weight $\beta_i$ for all $i \geq 2$. The identification of $\Ext^1(\cO(f_i^{s_i}), K_{i-1})$ with $\oplus_{1 \leq j < i} \Ext^1(\cO(f_i^{s_i}), \cO(f_j^{s_j}))$ can be made step by step. Suppose we have identified 
\[\Ext^1(\cO(f_i^{s_i}), K_{i-1})
\cong
\Ext^1(\cO(f_i^{s_i}), K_{t-1}) \oplus 
\oplus_{t \leq j < i} \Ext^1(\cO(f_i^{s_i}), \cO(f_j^{s_j})),\] 
where $t \geq 3$. Then in order to identify $\Ext^1(\cO(f_i^{s_i}), K_{t-1})$ with $\Ext^1(\cO(f_i^{s_i}), \cO(f_{t-1}^{s_{t-1}})) \oplus \Ext^1(\cO(f_i^{s_i}), K_{t-2})$, we choose a (non-canonical) splittings of the following exact sequence:
\begin{equation} \label{eq:SES_extensions}
	0 \to \Ext^1(\cO(f_i^{s_i}), K_{t-2}) \to 
	\Ext^1(\cO(f_i^{s_i}), K_{t-1}) \to 
	\Ext^1(\cO(f_i^{s_i}), \cO(f_{t-1}^{s_{t-1}})) \to 0,
\end{equation}
Suppose by induction that $\alpha$ acts on $\Ext^1(\cO(f_i^{s_i}), K_{t-1})$ by scaling by $\beta_i$, and that we have also shown the desired claim that $\alpha \cdot v_{ij} = \beta_j^{-1}\beta_i v_{ij}$ whenever $j \geq t$. The base case when $t = i$ is certainly true. We will show that $\alpha$ acts on $\Ext^1(\cO(f_i^{s_i}), K_{t-2})$ by $\beta_i$ as well, and that $\alpha \cdot v_{i, t-1} = \beta_{t-1}^{-1}\beta_i v_{i,t-1}$.

Let us first consider the second map in \eqref{eq:SES_extensions} above. The map is given by taking the pushout along the given map $K_{t-1} \xrightarrow{g_{t-1}} \cO(f_{t-1}^{s_{t-1}})$:
\[
	\begin{tikzcd}
0 \arrow[r] 
& K_{t-1} \arrow[d,"g_{t-1}"] \arrow[r, "f"]           
& F_{t-1, i} \arrow[r, "g"] \arrow[d, "\psi"] 
& \cO(f_i^{s_i}) \arrow[r] \arrow[d,-, double equal sign distance] & 0 \\
0 \arrow[r] & \cO(f_{t-1}^{s_{t-1}}) \arrow[r, "\phi"] & E_{t-1,i} \arrow[r, "\gamma"]           & \cO(f_i^{s_i}) \arrow[r]                                & 0
\end{tikzcd}.
\]

Since $\alpha$ acts on the first extension by sending $g$ to $\beta_i g$ by our inductive hypothesis, the prescribed action on the original extensions would act on $\Ext^1(\cO(f_i^{s_i}), \cO(f_{i-1}^{s_{i-1}}))$ by scaling by $\beta_{t-1}^{-1}\beta_i$, as explained by the following diagram.
\[
	\begin{tikzcd}
0 \arrow[r] & K_{t-1} \arrow[d,"\beta_{t-1} g_{t-1}"] \arrow[r, "f"]           & F_{t-1, i} \arrow[r, "\beta_i g"] \arrow[d, "\beta_{t-1} \psi"] & \cO(f_i^{s_i}) \arrow[r] \arrow[d,-, double equal sign distance] & 0 \\
0 \arrow[r] & \cO(f_{t-1}^{s_{t-1}}) \arrow[r, "\phi"] & E_{t-1, i} \arrow[r, "\beta_{t-1}^{-1}\beta_i \gamma"]           & \cO(f_i^{s_i}) \arrow[r]                                & 0
\end{tikzcd}
\]

We can also compute the restriction of the action to $\Ext^1(\cO(f_i^{s_i}), K_{t-2})$, by interpreting the first map in \eqref{eq:SES_extensions}. It is again given by taking a pushout along the map $g_{t-1}: K_{t-2} \to K_{t-1}$. So the action of $\alpha$ on $\Ext^1(\cO(f_i^{s_i}), K_{t-1})$ with weight $\beta_i$ restricts as the following diagram shows:
\[
\begin{tikzcd}
0 \arrow[r] 
& K_{t-2} \arrow[r, "\phi'"] \arrow[d,"f_{t-1}"] 
& F_{t-2, i} \arrow[r, "\beta_i \gamma'"]  \arrow[d, "\psi' "]        
& \cO(f_i^{s_i}) \arrow[r] \arrow[d,-, double equal sign distance] 
& 0 \\ 
0 \arrow[r]
& K_{t-1} \arrow[r, "f"]           
& F_{t-1,i} \arrow[r, "\beta_i g"] 
& \cO(f_i^{s_i}) \arrow[r] 
& 0 
\end{tikzcd}.
\]
Thus $(\alpha_1, \dots, \alpha_{\ell-1}) \in \bbG_m^{\ell-1}$ acts on $\Ext^1(\cO(f_i^{s_i}), K_{t-2})$ also by scaling by $\beta_i$.
\end{proof}

Since the action above arises from scaling extension classes, it does not change the isomorphism class of the bundles $K_1, \dots, K_{\ell}$. In particular, splitting loci corresponding to the isomorphism class of $K_{\ell}$ are invariant under this action.

\begin{example}
Let $\vec{e} = (-2,0,2)$. Then we can write the coordinate ring of $H^1\cEnd(\cO(\vec{e}))$ as $k[a, b, c_1, c_2, c_3]$, where $a, b$ are coordinates corresponding to the pairs $(-2, 0)$ and $(0, 2)$ respectively, and $c_1, c_2, c_3$ are coordinates of $\Ext^1(\cO(2), \cO(-2))$. Then there is a natural $\bbG_m^2$-action on this ring, giving rise to a $\Z^2$-grading. Namely, $\deg a = (1,0), \deg b = (0,1)$ and $\deg c_i = (1,1)$. All splitting loci inside $H^1\cEnd(\cO(\vec{e}))$ are homogeneous with respect to this grading.
\end{example}

\begin{example} Of the most importance to us is the easiest case where $\vec{e} = (f_1^{s_1}, f_2^{s_2})$. In this case, $H^1 \cEnd(\cO(\vec{e})) = \Ext^1(\cO(f_2^{s_2}), \cO(f_1^{s_1}))$ admits a $\bbG_m$-action that corresponds to scaling the extension class. The corresponding $\Z$-grading on the polynomial ring with $s_1s_2(f_2-f_1-1)$ variables is the standard grading, and ideals for splitting loci are homogeneous with respect to this grading.
\end{example}

\subsection{Gorenstein and \texorpdfstring{$N$}{N}-Gorenstein Algebraic Stacks} \label{sec:sm_local}
Let us check that both being Gorenstein and being $N$-Gorenstein are smooth-local properties for locally Noetherian, Cohen-Macaulay schemes.
\begin{lemma}
Let $X$ be a locally Noetherian Cohen-Macaulay scheme with a smooth cover by schemes $U_{\alpha}$ indexed by $\alpha \in I$. Then $X$ is ($N$-)Gorenstein if and only if $U_{\alpha}$ is ($N$-)Gorenstein for every $\alpha$.
\end{lemma}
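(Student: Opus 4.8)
The plan is to reduce the statement to the behaviour of the canonical sheaf under a smooth morphism. Write $f_\alpha \colon U_\alpha \to X$ for the structure maps of the cover and let $n_\alpha$ be the (locally constant) relative dimension of $f_\alpha$. Since $f_\alpha$ is smooth and $X$ is Cohen-Macaulay, each $U_\alpha$ is Cohen-Macaulay, so ``($N$-)Gorenstein'' means ``$\omega^{\otimes N}$ is a line bundle'' for each scheme involved (with $N = 1$ in the unadorned case). First I would record the duality isomorphism
\[
	\omega_{U_\alpha} \;\cong\; f_\alpha^*\omega_X \otimes \textstyle\bigwedge^{n_\alpha}\Omega_{U_\alpha/X},
\]
which holds for any smooth morphism (it is the case $f^! = f^*(-) \otimes \bigwedge^{n}\Omega_{f}[n]$ of Grothendieck duality, specialised to Cohen-Macaulay schemes, where the dualizing complex is a single sheaf in one cohomological degree). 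Here $\bigwedge^{n_\alpha}\Omega_{U_\alpha/X}$ is a line bundle because $\Omega_{U_\alpha/X}$ is locally free of rank $n_\alpha$, so after tensoring to the $N$-th power, $\omega_{U_\alpha}^{\otimes N}$ and $f_\alpha^*(\omega_X^{\otimes N})$ differ by the line bundle $(\bigwedge^{n_\alpha}\Omega_{U_\alpha/X})^{\otimes N}$, and hence one is invertible if and only if the other is.

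Granting this, the ``only if'' direction is immediate: if $\omega_X^{\otimes N}$ is a line bundle then so is its pullback $f_\alpha^*(\omega_X^{\otimes N})$, hence so is $\omega_{U_\alpha}^{\otimes N}$, i.e.\ $U_\alpha$ is $N$-Gorenstein. For the ``if'' direction I would invoke faithfully flat descent: being a line bundle (equivalently, being a finite locally free sheaf of rank $1$) is an fppf-local property on the base, and $\{f_\alpha\colon U_\alpha \to X\}$ is an fppf cover since each $f_\alpha$ is smooth --- in particular flat and locally of finite presentation --- and they are jointly surjective. If every $U_\alpha$ is $N$-Gorenstein, then each $f_\alpha^*(\omega_X^{\otimes N}) \cong \omega_{U_\alpha}^{\otimes N} \otimes (\bigwedge^{n_\alpha}\Omega_{U_\alpha/X})^{\otimes -N}$ is a line bundle, so by descent $\omega_X^{\otimes N}$ is a line bundle and $X$ is $N$-Gorenstein.

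The only genuine input is the smooth duality isomorphism; the remaining ingredients --- that Cohen-Macaulayness is preserved by smooth morphisms, that line bundles pull back to line bundles, and that ``finite locally free of rank $1$'' satisfies fppf descent --- are formal, so the effort would go into citing the duality statement at the right level of generality (via the relative dualizing complex, or, to stay affine-local, via the base-change behaviour of canonical modules of Cohen-Macaulay local rings along flat local homomorphisms with regular closed fibres; the Gorenstein case $N=1$ can alternatively be checked purely locally, since for such a homomorphism the source is Gorenstein iff the target is). Two small points worth noting in passing: $\omega_X$ is only well defined once $X$ admits a dualizing complex, which is automatic in all cases relevant here (e.g.\ for schemes of finite type over a field), and the property of being $N$-Gorenstein does not see the line-bundle-twist ambiguity in the normalisation of $\omega_X$, so the statement is well posed.
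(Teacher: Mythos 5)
Your proof is correct and follows essentially the same route as the paper's: both rest on the base-change formula $\omega_{U_\alpha} \cong f_\alpha^*\omega_X \otimes \omega_{U_\alpha/X}$ for a smooth morphism (the paper cites the relative dualizing module and trivializes it after localizing) combined with faithfully flat descent. The only difference is packaging --- you phrase the descent globally as an fppf statement about line bundles, whereas the paper reduces to a flat local homomorphism of Cohen--Macaulay local rings and descends freeness of modules --- which is a cosmetic distinction.
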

\begin{proof}
Since a scheme is Gorenstein if and only if every local ring is Gorenstein, and since being locally Noetherian and Cohen-Macaulay are both smooth-local properties, we can reduce to the following claim: let $A, B$ be Noetherian, Cohen-Macaulay local rings and let $f: A \to B$ be a smooth and local homomorphism. Then $A$ is ($N$-)Gorenstein if and only if $B$ is ($N$-)Gorenstein.

Since $f$ is smooth, there exists a relative dualizing module $\omega_f$ which is a rank one projective $B$-module. Since $B$ is local, in fact $\omega_f \cong B$. By \cite[Lemma 0E30]{stacks-project}, $\omega_B = \omega_f \otimes_B (B \otimes_A \omega_A)$ (the derived tensor product is just the regular tensor product because $\omega_f$ is projective, and the derived pullback of $\omega_A$ to $\Spec B$ is just the regular pullback because $f$ is smooth and in particular flat). So in fact $\omega_B = \omega_A \otimes_A B$, and $\omega_B^{\otimes N} = \omega_A^{\otimes N} \otimes_A B$. Since $f: A \to B$ is smooth and in particular faithfully flat, given a finitely generated $A$-module M, $M \otimes_A B$ is free iff $M \otimes_A B$ is projective iff $M \otimes_A B$ is flat iff $M$ is flat iff $M$ is projective iff $M$ is free. Therefore $\omega_A^{\otimes N} \cong A$ if and only if $\omega_B^{\otimes N} \cong B$.

Alternatively just for the Gorenstein claim, one could use \cite[Lemma 0BJL]{stacks-project} ($B/\fm_B$ is Gorenstein because it's the fiber of a smooth morphism, so in particular should be regular).
\end{proof}

Thus as with all smooth-local properties, if $\fX$ is a locally Noetherian, Cohen-Macaulay algebraic stack, it makes sense to talk about whether $\fX$ is Gorenstein or $N$-Gorenstein. We say that $\fX$ is $\Q$-Gorenstein if there exists $N \geq 1$ such that $\fX$ is $N$-Gorenstein.

\subsection{Reduction to a class calculation}
In this subsection, we explain how to reduce the question of whether splitting loci are Gorenstein or $\Q$-Gorenstein to a class calculation. 

\begin{proposition} Let $S$ be a $\Z$-graded Noetherian ring with $S_0 \cong k$. Let $M$ be a graded finitely generated $S$-module. Then $M$ is locally free if and only if it is a free $S$-module.
\end{proposition}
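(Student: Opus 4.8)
The implication ``free $\Rightarrow$ locally free'' is immediate, and since $S$ is Noetherian the finitely generated module $M$ is finitely presented, so for $M$ the notions ``locally free'' and ``projective'' coincide. Everything therefore reduces to the statement that a finitely generated graded projective $S$-module is free. The plan is to run the standard ``graded Nakayama plus projectivity'' argument --- cover $M$ by a graded free module matching $M/\mathfrak{m}M$ and then kill the kernel --- being careful that $S$ is only $\Z$-graded, not non-negatively graded.

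First I would pin down the graded-local structure of $S$. Since $S_0=k$ is a field, every homogeneous maximal ideal $\mathfrak{p}$ meets $S_0$ in a proper ideal of $k$, hence in $0$; and two distinct homogeneous maximal ideals $\mathfrak{p},\mathfrak{q}$ would satisfy $\mathfrak{p}+\mathfrak{q}=S$, forcing $(\mathfrak{p}\cap S_0)+(\mathfrak{q}\cap S_0)=S_0$, which is absurd. Hence $S$ has a unique maximal homogeneous ideal $\mathfrak{m}$ (that is, $S$ is $\ast$-local), with $\mathfrak{m}\cap S_0=0$, and $S/\mathfrak{m}$ is a graded field, namely $k$ or $k[t,t^{-1}]$; either way, every finitely generated graded $(S/\mathfrak{m})$-module is graded-free. (When the grading is non-negative --- for instance the standard-graded polynomial rings that actually occur below --- this is transparent: $\mathfrak{m}=\bigoplus_{n>0}S_n$ and $S/\mathfrak{m}=k$.)

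Next I would record graded Nakayama for $\mathfrak{m}$: a finitely generated graded module $N$ with $\mathfrak{m}N=N$ is zero. This is the usual adjugate argument --- from $x_i=\sum_j a_{ij}x_j$ with homogeneous generators $x_i$ and homogeneous $a_{ij}\in\mathfrak{m}$ one gets $\det(I-A)\,x_i=0$ --- the only wrinkle being that $\det(I-A)$ is forced to be homogeneous of degree $0$ (the degree contributions cancel within each term of the expansion), hence lies in $S_0=k$ and differs from $1$ by an element of $\mathfrak{m}\cap S_0=0$, so is a unit. With this in hand the main step is routine: $\overline{M}:=M/\mathfrak{m}M$ is graded-free over $S/\mathfrak{m}$, say $\overline{M}\cong\bigoplus_{j=1}^{r}(S/\mathfrak{m})(b_j)$; lifting the free generators to homogeneous elements $m_j\in M_{-b_j}$ produces a graded map $\phi\colon F:=\bigoplus_j S(b_j)\to M$ whose cokernel $C$ satisfies $C=\mathfrak{m}C$, so $\phi$ is surjective; the kernel $K$ is finitely generated (as $S$ is Noetherian) and graded; and since $M$ is projective the sequence $0\to K\to F\to M\to 0$ splits, so $F\cong K\oplus M$, whereupon applying $-\otimes_S S/\mathfrak{m}$ exhibits $\phi\otimes S/\mathfrak{m}$ as the projection $\overline{K}\oplus\overline{M}\to\overline{M}$ --- but that map carries the free generators of $\overline{M}$ to themselves and so is an isomorphism, forcing $\overline{K}=K/\mathfrak{m}K=0$ and hence $K=0$ by Nakayama. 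Then $\phi$ is an isomorphism and $M\cong\bigoplus_j S(b_j)$ is free.

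The one point requiring genuine care --- the main obstacle --- is exactly that $S$ is $\Z$-graded rather than non-negatively graded: the naive irrelevant ideal $\bigoplus_{n\ne 0}S_n$ can fail to be proper, so one must isolate the true maximal homogeneous ideal $\mathfrak{m}$; the residue ring $S/\mathfrak{m}$ need not be $k$, so one argues with graded-free modules over a graded field rather than with bases over a residue field; and the Nakayama determinant must be checked to land in degree $0$. Each of these is easy once spotted, but a purely ``non-negatively graded'' instinct would miss them.
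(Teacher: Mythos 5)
Your proof is correct and follows essentially the same route as the paper's: a graded Nakayama argument produces a graded free cover that is an isomorphism modulo the graded maximal ideal, and projectivity splits the resulting short exact sequence, forcing the kernel to vanish by Nakayama again. Your extra care with the genuinely $\Z$-graded case is a real refinement rather than a detour --- the paper's phrase ``tensoring with $k$'' tacitly assumes a graded augmentation $S \to k$, i.e.\ that $\bigoplus_{n \neq 0} S_n$ is a proper ideal, which can fail for a general $\Z$-graded ring with $S_0 \cong k$ (e.g.\ $k[t,t^{-1}]$), although it does hold for the standard- and multi-graded polynomial rings to which the paper actually applies the proposition.
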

\begin{proof}
We shall show the forward direction. Suppose that $M$ is a graded, finitely generated, locally free $S$-module. In particular, $M$ is projective. By the graded Nakayama's lemma, there exists a graded surjection from a graded free $S$-module which is an isomorphism after tensoring with $k$. Let the kernel be $K$, so that we have a short exact sequence 
\[
	0 \to K \to \oplus S(-d_i) \twoheadrightarrow M \to 0
\]
Then the fact that $M$ is projective implies that tensoring with $k$ is exact, which implies that $K \otimes_S k = 0$. But by the graded Nakayama's lemma, this implies that $K = 0$. In particular, $M$ is free.  
\end{proof}

Recall that on an integral and normal scheme, the Weil class group can also be viewed as the group of rank $1$ reflexive sheaves (for details see e.g. \cite{SchwedeReflexive}). As such, given a rank $1$ reflexive sheaf $M$ on an integral and normal scheme $X$, we use $[M] \in A^1(X)$ to denote the corresponding class. Recall from the introduction that $\splitt$ denotes the splitting locus inside $H^1\cEnd(\cO(\pvec{e}'))$, where $\pvec{e}' = ((-M)^{r-1}, D)$ and $\pvec{e}' > \vec{e}$. We will write $\omega_{\splitt}$ for the canonical module of $\splitt$.

\begin{proposition} \label{prop:graded} $\omega_{\splitt}^{\otimes N}$ is locally free if and only if $N[\omega_{\splitt}] \in A^1(\overline{\Sigma}_{\vec{e}})$ is zero.
\end{proposition}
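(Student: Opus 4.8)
The plan is to combine the $\bbG_m$-action on $H^1\cEnd(\cO(\pvec{e}'))$ with the structural result that this affine space, for $\pvec{e}' = ((-M)^{r-1},D)$, is (an open substack of) a vector bundle over $\vbst$ after quotienting by a $GL_{r-1}$-action. First I would record that the coordinate ring $S$ of $H^1\cEnd(\cO(\pvec{e}'))$ is a polynomial ring, $\Z$-graded by the natural $\bbG_m$-action of the last Example, with $S_0 = k$ in the standard grading; and that the ideal of $\splitt$ is homogeneous for this grading, so that $R := \mathcal{O}(\splitt)$ is a $\Z$-graded Noetherian ring with $R_0 = k$. Since $\splitt$ is integral and normal (splitting loci are normal by \cite{LLV}), $\omega_{\splitt}$ is represented by a rank $1$ reflexive $R$-module, and I claim this module inherits a grading: the point is that the $\bbG_m$-action on $\splitt$ makes $\omega_{\splitt}$ canonically $\bbG_m$-equivariant, hence graded, because the dualizing complex is functorial and the action is through a group scheme. (Concretely, $\omega_{\splitt}$ can be computed as an Ext module $\Ext^c_S(R,\omega_S)$ for $c$ the codimension, and this is a map of graded modules since $R$ and $\omega_S$ are graded; its double dual, which is $\omega_{\splitt}$ up to the reflexive hull, is again graded.)

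Next I would apply the first Proposition of this subsection: a graded, finitely generated, locally free module over a $\Z$-graded Noetherian ring with $S_0 = k$ — here $R_0 = k$ — is free. Thus $\omega_{\splitt}^{\otimes N}$, which is finitely generated and graded, is locally free if and only if it is free as an $R$-module, i.e. isomorphic to $R(d)$ for some shift $d$. Forgetting the grading, $\omega_{\splitt}^{\otimes N} \cong R$ as modules, which is exactly the statement that $[\omega_{\splitt}^{\otimes N}] = N[\omega_{\splitt}] = 0$ in $A^1(\splitt)$, viewing the class group as the group of rank $1$ reflexive sheaves. Conversely if $N[\omega_{\splitt}] = 0$ then $\omega_{\splitt}^{\otimes N}$ (reflexive hull of the $N$-th tensor power) is isomorphic to $\mathcal{O}_{\splitt}$, hence locally free; and since $\splitt$ is Cohen-Macaulay, the reflexive hull of $\omega_{\splitt}^{\otimes N}$ agrees with $\omega_{\splitt}^{\otimes N}$ in codimension $1$, and taking reflexive hull does not change the class, so this is consistent. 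I should be a little careful to distinguish $\omega_{\splitt}^{\otimes N}$ (literal tensor power) from its reflexive hull $\omega_{\splitt}^{[N]}$; since $\omega_{\splitt}$ is Cohen-Macaulay of the right depth, $\omega_{\splitt}^{\otimes N}$ is torsion-free, and it is locally free precisely when its reflexive hull is, so the two formulations of "locally free" coincide and both are detected by the class $N[\omega_{\splitt}]$.

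The main obstacle I anticipate is the equivariance/grading claim for $\omega_{\splitt}$: one needs that the canonical module, as a reflexive sheaf, is $\bbG_m$-linearizable so that it genuinely lives in the graded category where the Nakayama argument applies. I would address this either by the explicit $\Ext$-module description above (which is manifestly graded once one fixes graded presentations of $R$ and $\omega_S$, and $\omega_S \cong S(\text{shift})$ since $S$ is polynomial), or by invoking functoriality of relative dualizing complexes for the $\bbG_m$-action groupoid. Everything else — the reduction "locally free $\Rightarrow$ free" and the translation between "free module" and "trivial class" — is formal given the Propositions already in the excerpt.
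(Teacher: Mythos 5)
Your argument is essentially the paper's own proof: the paper likewise identifies the vanishing of $N[\omega_{\splitt}]$ with $\omega_{\splitt}^{\otimes N}$ being the structure sheaf via the description of the class group by rank~$1$ reflexive sheaves, and then uses the gradedness of $\omega_{\splitt}^{\otimes N}$ under the standard $\bbG_m$-action together with the preceding proposition to convert ``structure sheaf'' into ``locally free.'' Your additional care about $\bbG_m$-equivariance of the canonical module and about literal tensor powers versus reflexive powers only fills in conventions the paper leaves implicit.
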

\begin{proof}
The canonical module $\omega_{\splitt}$ is a rank $1$ reflexive sheaf (e.g. \cite[Theorem 3.3.10(d), Proposition 3.3.13]{Bruns_Herzog_1998}). Therefore $N[\omega_{\splitt}]$ is zero if and only if $\omega_{\splitt}^{\otimes N}$ is the structure sheaf. But since $\omega_{\splitt}^{\otimes N}$ is graded, by the previous proposition, it is the structure sheaf if and only if it is locally free.
\end{proof}

\section{Some class calculations}

\subsection{Codimension One Chow Group of \texorpdfstring{$\Splitt$}{Universal Splitting Loci}} \label{sub:codimension_one_chow_group_of_splitt}
Let $\vec{e} = (f_1^{s_1}, \dots, f_\ell^{s_\ell})$, where $f_1 < \cdots < f_\ell$. Let $r = \sum_{i = 1}^\ell s_i = \rk \vec{e}$, and let $d = \sum_{i=1}^\ell s_i f_i = \deg \vec{e}$. Recall from the proof of \cite[Proposition 9.1]{LLV} that if $\pvec{e}'$ is such that $u(\pvec{e}')-u(\vec{e}) = 1$, then there exists $1 \leq i \leq \ell$ such that $f_{i-1} + 1 < f_i < f_{i+1}-1$ (as far as $i$ has neighbors), $s_i \geq 2$, and $\pvec{e}'$ arises from replacing $f_i^{s_i}$ in $\vec{e}$ with $(f_i - 1, f_i^{s_i - 2}, f_i+1)$. We denote by $\pvec{e}^{(i)}$ the codimension one splitting type $\pvec{e}'$ obtained by unbalancing $f_i^{s_i}$ when it exists. Let $\bZ_{\vec{e}}$ be the union of splitting loci $\SplittOpen$ and all $\mathbf{\Sigma}_{\pvec{e}'}$ where $u(\pvec{e}')-u(\vec{e}) = 1$. 

Let $\SplittNye$ be the algebraic stack whose functor is given by 
\begin{align*}
  \left (T \to \Spec k \right ) 
  & \mapsto 
  \left \{
    \begin{tabular}{lll}
    \text{A vector bundle $\cE$ of rank $r$ and fiberwise degree $d$  and a flag of} \\
    \text{quotients $\cE = \cQ_0 \twoheadrightarrow \cQ_{1}
    \twoheadrightarrow \cQ_{2} 
    \twoheadrightarrow \cdots 
    \twoheadrightarrow \cQ_{\ell-1}$ on $T \times \bP^1$, such that $\cQ_{i}$ is} \\ 
    \text{flat over $T$, and fiberwise of rank $\sum_{j=1}^{\ell-i} s_j$ and degree $\sum_{j=1}^{\ell-i} s_jf_j$}
    \end{tabular}
  \right\} / \sim,
\end{align*}
where two flags of quotients $\cE = \cQ_0 \twoheadrightarrow \cQ_{1}
    \twoheadrightarrow \cQ_{2} 
    \twoheadrightarrow \cdots 
    \twoheadrightarrow \cQ_{\ell-1}$, $\cE = \cQ_0 \twoheadrightarrow \cQ_{1}'
    \twoheadrightarrow \cQ_{2}' 
    \twoheadrightarrow \cdots 
    \twoheadrightarrow \cQ_{\ell-1}'$ are equivalent if there are isomorphisms $\cQ_i \to \cQ_i'$ which together with the identity morphism on $\cQ_0$ form a commutative diagram.
By \cite[Theorem 5]{Lin2025}, $\SplittNye$ is a resolution of singularities for $\Splitt$. Moreover, for each $\pvec{e}^{(i)}$, the only possible such flags are coarsenings of the Harder-Narasimhan flag for $\cO(\pvec{e}^{(i)})$, so the natural map $\SplittNye \to \Splitt$ is not only an isomorphism over $\SplittOpen$, but also an isomorphism over $\bZ_{\vec{e}}$. As such, on $\bZ_{\vec{e}} \times \bP^1$, there is a natural flag of quotients for the universal bundle
\[
	\cE = \cQ_0 \twoheadrightarrow \cQ_{1}
    \twoheadrightarrow \cQ_{2} 
    \twoheadrightarrow \cdots 
    \twoheadrightarrow \cQ_{\ell-1},
\]
where each $\cQ_i$ is in fact a vector bundle. For $1 \leq i < \ell$, let $\cN_{\ell-i+1} = \ker(\cQ_{i-1} \to \cQ_{i})$, and let $\cN_{1} = \cQ_{\ell-1}$. The sheaves $\cN_i$ are vector bundles on $\bZ_{\vec{e}} \times \bP^1$, which split as $\cO(f_i^{s_i})$ over $\bZ_{\vec{e}} \setminus \mathbf{\Sigma}_{\pvec{e}^{(i)}}$, and split as $\cO(f_i-1, f_i^{s_i-2}, f_{i}+1)$ over $\mathbf{\Sigma}_{\pvec{e}^{(i)}}$, if it exists. 

Let $z$ be the hyperplane class on $\bZ_{\vec{e}} \times \bP^1$, corresponding to $\cO(1)$. For all $k \geq 1$, there exist classes $a_k^{(i)} \in A^k(\bZ_{\vec{e}}), b_k^{(i)} \in A^{k-1}(\bZ_{\vec{e}})$ such that
\[
	c_k(\cN_i(-f_i)) = \pi^* a_k^{(i)} + \pi^* {b_k^{(i)}} z.
\]  
If $\pvec{e}^{(i)}$ does not exist, then 
\begin{equation} \label{eq:NiIsPulledBack}
\cN_i(-f_i) \cong \pi^* \pi_* (\cN_i(-f_i)),
\end{equation} 
and in particular $b_k^{(i)} = 0$ for all $k \geq 1$. If ${\pvec{e}^{(i)}}$ exists, then away from $\mathbf{\Sigma}_{\pvec{e}^{(i)}}$, we have \eqref{eq:NiIsPulledBack} again, which shows that up to classes in the image of $A^*(\mathbf{\Sigma}_{\pvec{e}^{(i)}})$, $b_k^{(i)} \equiv 0$. Note also that $b_1^{(i)}$ is the degree of $\cN_i(-f_i)$ on fibers, which is zero.

\begin{lemma} \label{lem:GRRCalc} For all $M \in \Z$,
\begin{align*}
	c_1 (R\pi_* \cN_i(-f_i+M)) &= c_1(\pi_* \cN_i(-f_i)) + M\pi_*(c_1(\cN_i) . z ) \\ 
	&= (M+1)a_1^{(i)} - b_2^{(i)}.
\end{align*}
\end{lemma}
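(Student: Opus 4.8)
The plan is to compute $c_1(R\pi_* \cN_i(-f_i+M))$ by Grothendieck--Riemann--Roch applied to the projection $\pi \colon \bZ_{\vec e} \times \bP^1 \to \bZ_{\vec e}$, which will yield the first displayed equality, and then to evaluate the two resulting terms in terms of the classes $a_1^{(i)}$ and $b_2^{(i)}$ using the definitions and the fact that $b_1^{(i)}=0$, which gives the second equality. Since $\pi$ is a $\bP^1$-bundle (in fact the trivial family $\bP^1$), GRR reads $\operatorname{ch}(R\pi_* \cF) = \pi_*\!\big(\operatorname{ch}(\cF)\cdot \operatorname{td}(T_\pi)\big)$, and I only need to extract the degree-one part. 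Here $\operatorname{td}(T_\pi) = 1 + \tfrac12 c_1(T_\pi) + \cdots$ and $c_1(T_\pi)$ is pulled back from $\bP^1$ (it is $2z$ on $\bZ_{\vec e}\times\bP^1$ since $z$ restricts to a point class on each fiber). First I would write $\cF = \cN_i(-f_i+M) = \cN_i(-f_i)\otimes \pi^*_{\bP^1}\cO(M)$, so $\operatorname{ch}(\cF) = \operatorname{ch}(\cN_i(-f_i))\cdot e^{Mz}$; expanding to the relevant order and pushing forward (using $\pi_* z = 1$, $\pi_* 1 = 0$, $\pi_*(z^2)=0$) picks out exactly $c_1(\pi_*\cN_i(-f_i)) + M\,\pi_*(c_1(\cN_i).z)$ as the degree-one part, after noting that the contribution of $\operatorname{ch}_0$ against the $\tfrac12 c_1(T_\pi)$ term is proportional to the fiber rank times $\pi_* z$ but is absorbed/cancels appropriately; this is the first equality. (One should be slightly careful: the rank-times-$z$ term from $\operatorname{td}$ contributes $\tfrac12(\operatorname{rk}\cN_i)\cdot 2\cdot\pi_* z \cdot(\text{something degree }0)$, but that lives in $A^0$, not $A^1$, so it does not affect $c_1$; the genuinely relevant cross-terms are $\pi_*(c_1(\cN_i(-f_i)).z)$ from $\operatorname{ch}_1\cdot 1$ won't appear either since $\pi_*$ of a $\pi^*$-class times nothing vanishes — only terms with exactly one power of $z$ survive.)

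Let me restate the bookkeeping cleanly, since that is really the whole computation. Write $c_1(\cN_i(-f_i)) = \pi^* a_1^{(i)} + b_1^{(i)} z = \pi^* a_1^{(i)}$ because $b_1^{(i)}=0$, and $c_1(\cN_i) = c_1(\cN_i(-f_i)) + (\operatorname{rk}\cN_i) f_i z = \pi^* a_1^{(i)} + r_i f_i z$ where $r_i = \operatorname{rk}\cN_i$. Then $\pi_*(c_1(\cN_i).z) = \pi_*(\pi^* a_1^{(i)}.z) + r_i f_i \pi_*(z^2) = a_1^{(i)}$, so the $M$-term in the first line is $M a_1^{(i)}$. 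For the $M$-independent term $c_1(\pi_* \cN_i(-f_i))$: this is again computed by GRR, and the degree-one part of $\pi_*(\operatorname{ch}(\cN_i(-f_i))\operatorname{td}(T_\pi))$ is $\pi_*(\operatorname{ch}_2(\cN_i(-f_i)).1) + \pi_*(\operatorname{ch}_1(\cN_i(-f_i)).\tfrac12 c_1(T_\pi))$; using $\operatorname{ch}_1 = c_1 = \pi^* a_1^{(i)}$ and $\tfrac12 c_1(T_\pi) = z$, the second piece is $\pi_*(\pi^* a_1^{(i)}.z) = a_1^{(i)}$, and using $\operatorname{ch}_2 = \tfrac12(c_1^2 - 2c_2)$ with $c_1(\cN_i(-f_i))^2 = (\pi^* a_1^{(i)})^2$ having no $z$-component (so $\pi_*$ kills it) and $c_2(\cN_i(-f_i)) = \pi^* a_2^{(i)} + b_2^{(i)} z$, the first piece is $\pi_*(-c_2(\cN_i(-f_i))) = -b_2^{(i)}$. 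Adding, $c_1(\pi_*\cN_i(-f_i)) = a_1^{(i)} - b_2^{(i)}$, and combined with the $M a_1^{(i)}$ term the total is $(M+1)a_1^{(i)} - b_2^{(i)}$, as claimed. (Here I am using that $R^1\pi_* \cN_i(-f_i+M)$ either vanishes or is itself pulled back in the cases that matter, so $c_1(R\pi_*) = c_1(\pi_*)$; more robustly, GRR directly computes $c_1$ of the $K$-theory class $R\pi_*$, which is what appears in the statement, so no vanishing input is even needed.)

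The only genuine subtlety — and the step I would flag as most error-prone rather than hard — is the sign and normalization bookkeeping in $\operatorname{td}(T_\pi)$ and the identification $\tfrac12 c_1(T_\pi) = z$: one must remember $T_\pi = T_{\bP^1}$ pulled back, $\deg T_{\bP^1} = 2$, and that $z$ is the class with $\pi_* z = 1$, so $\tfrac12 c_1(T_\pi) = z$ exactly. Everything else is the projection formula plus $\pi_* z = 1$, $\pi_*(z^j) = 0$ for $j \neq 1$ on the $\bP^1$-bundle, and the vanishing of $b_1^{(i)}$ established just before the lemma. I would present the proof as: (1) apply GRR and extract the degree-$\le 1$ terms to get the first equality; (2) substitute the definitions of $a_1^{(i)}, b_2^{(i)}$ and use $b_1^{(i)}=0$ to get the second.
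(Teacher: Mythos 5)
Your proof is correct and follows essentially the same route as the paper: Grothendieck--Riemann--Roch for the trivial $\bP^1$-bundle with relative Todd class $1+z$, followed by pushforward using $c_1(\cN_i(-f_i)) = \pi^* a_1^{(i)}$ (since $b_1^{(i)}=0$) and $c_2(\cN_i(-f_i)) = \pi^* a_2^{(i)} + \pi^* b_2^{(i)} z$. The only cosmetic difference is that you factor the twist as multiplication by $e^{Mz}$ on Chern characters while the paper records $c_1, c_2$ of the twisted bundle directly; both give $(M+1)a_1^{(i)} - b_2^{(i)}$.
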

\begin{proof}
Twisting changes the Chern classes as follows:
\begin{align*}
	c_1(\cN_i(-f_i+M)) &= \pi^* a_1^{(i)} + Ms_i z, \\ 
	c_2(\cN_i(-f_i+M)) &= \pi^* a_2^{(i)} + \pi^* b_2^{(i)} z + (s_i - 1) M\pi^* a_1^{(i)} z.
\end{align*}
Recall that the relative Todd class of a trivial $\bP^1$-bundle is $1+z$. So by Grothendieck-Riemann-Roch, 
\begin{align*}
c_1 (R\pi_* \cN_i(-f_i+M)) &= \pi_*
\left (
\frac{1}{2}(c_1(\cN_i(-f_i+M))^2 - 2c_2(\cN_i(-f_i+M))) + c_1(\cN_i(-f_i+M)).z
\right ) \\ 
&=
\pi_*
\left (
\frac{1}{2}(2M s_i \pi^* a_1^{(i)} . z - 2\pi^* b_2^{(i)} . z - 2M(s_i-1) \pi^*a_1^{(i)} . z ) + \pi^* a_1^{(i)} . z
\right )  \\ 
& = (M+1)a_1^{(i)} - b_2^{(i)}.
\end{align*}
\end{proof}

\begin{proposition} \label{prop:basisForA1}
Whenever the codimension one splitting type ${\pvec{e}^{(i)}}$ exists, $[\mathbf{\Sigma}_{\pvec{e}^{(i)}}] = b_2^{(i)} \in A^1(\bZ_{\vec{e}})$. In particular, $A^1(\Splitt)$ has a basis given by $a_1^{(1)}, \dots, a_1^{(\ell)}$ and those $b_2^{(i)}$'s for which $\pvec{e}^{(i)}$ exists.
\end{proposition}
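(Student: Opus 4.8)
The plan is to transfer the computation to the smooth open substack $\bZ_{\vec{e}} \subseteq \Splitt$. By \cite[Proposition 9.1]{LLV}, recalled above, every splitting type $\pvec{e}''$ with $u(\pvec{e}'') - u(\vec{e}) = 1$ is one of the $\pvec{e}^{(i)}$, so $\Splitt \setminus \bZ_{\vec{e}}$ is supported in codimension $\geq 2$ and restriction gives an isomorphism $A^1(\Splitt) \xrightarrow{\sim} A^1(\bZ_{\vec{e}})$. Since $\SplittNye \to \Splitt$ restricts to an isomorphism over $\bZ_{\vec{e}}$ and $\SplittNye$ is smooth, $\bZ_{\vec{e}}$ is smooth; hence $A^1(\bZ_{\vec{e}}) = \operatorname{Pic}(\bZ_{\vec{e}})$, and an effective Cartier divisor on $\bZ_{\vec{e}}$ represents its own class.

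First I would show $b_2^{(i)} = [\mathbf{\Sigma}_{\pvec{e}^{(i)}}]$. The complex $R\pi_*\cN_i(-f_i-1)$ has fiberwise Euler characteristic $0$, is acyclic over $\bZ_{\vec{e}} \setminus \mathbf{\Sigma}_{\pvec{e}^{(i)}}$, and has one-dimensional $R^0$ and $R^1$ over $\mathbf{\Sigma}_{\pvec{e}^{(i)}}$; representing it by a two-term complex $\phi\colon E^0 \to E^1$ of vector bundles of equal rank, $\phi$ is an isomorphism precisely off $\mathbf{\Sigma}_{\pvec{e}^{(i)}}$, so $V(\det\phi)$ is an effective Cartier divisor with support $\mathbf{\Sigma}_{\pvec{e}^{(i)}}$, whence $V(\det\phi) = m_i\,\mathbf{\Sigma}_{\pvec{e}^{(i)}}$ for some integer $m_i \geq 1$. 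On the other hand $[V(\det\phi)] = c_1(E^1) - c_1(E^0) = -c_1(R\pi_*\cN_i(-f_i-1)) = b_2^{(i)}$ by Lemma \ref{lem:GRRCalc} with $M = -1$. So $b_2^{(i)} = m_i[\mathbf{\Sigma}_{\pvec{e}^{(i)}}]$ with $m_i \geq 1$, and the substance of the proposition is the multiplicity-one statement $m_i = 1$.

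I would obtain $m_i = 1$, and simultaneously the linear independence, from explicit complete rational curves. For each $i$ with $\pvec{e}^{(i)}$ existing, $\Ext^1(\cO(f_i+1), \cO(f_i-1)) \cong H^1(\bP^1, \cO(-2))$ is one-dimensional; spreading a generator over a second $\bP^1$ — as the extension class $\xi \boxtimes \sigma \in H^1(\bP^1,\cO(-2)) \otimes H^0(\bP^1, \cO(1)) \subseteq \Ext^1_{\bP^1\times\bP^1}(\cO(f_i+1)\boxtimes\cO(-1),\ \cO(f_i-1)\boxtimes\cO)$ with $\sigma$ vanishing at one point — produces a rank $s_i$ bundle $\mathcal{F}$ on $\bP^1\times\bP^1$, extending $\cO(f_i-1)\boxtimes\cO \oplus (\cO(f_i)\boxtimes\cO)^{\oplus(s_i-2)}$ by $\cO(f_i+1)\boxtimes\cO(-1)$, that restricts to $\cO(f_i^{s_i})$ on a general slice of the second factor and to $\cO(f_i-1, f_i^{s_i-2}, f_i+1)$ on one slice. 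Adjoining the constant blocks $\cO(f_j^{s_j})$ for $j\neq i$ yields a family over $\bP^1$ of bundles in $\bZ_{\vec{e}}$, hence — the flag of quotients extending since it does over $\bZ_{\vec{e}}$ — a complete curve $C_i \subseteq \bZ_{\vec{e}}$ meeting $\mathbf{\Sigma}_{\pvec{e}^{(i)}}$ and no deeper stratum. A Whitney-formula computation of $c_2(\cN_j(-f_j))$ on $C_i \times \bP^1$ gives $\deg(b_2^{(j)}|_{C_i}) = \delta_{ij}$: after twisting, the blocks $j \neq i$ contribute trivial bundles, and the $i$-th contributes exactly $1$. Restricting $b_2^{(i)} = m_i[\mathbf{\Sigma}_{\pvec{e}^{(i)}}]$ to $C_i$ then gives $1 = m_i\,\deg(\cO(\mathbf{\Sigma}_{\pvec{e}^{(i)}})|_{C_i})$ with the right factor $\geq 1$ (the degree of a line bundle on $\bP^1$ carrying a nonzero section with a zero), so $m_i = 1$.

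Finally, the basis statement. The localization sequence now reads $\bigoplus_i \Z \to A^1(\Splitt) \to A^1(\SplittOpen) \to 0$, the sum over $i$ with $\pvec{e}^{(i)}$ existing, with the $i$-th generator mapping to $[\mathbf{\Sigma}_{\pvec{e}^{(i)}}] = b_2^{(i)}$. Since $\SplittOpen = B\,\mathrm{Aut}(\cO(\vec{e}))$ and $\mathrm{Aut}(\cO(\vec{e}))$ is an extension of $\prod_i GL_{s_i}$ by a unipotent group, $A^1(\SplittOpen)$ is free of rank $\ell$ on the classes $a_1^{(i)}|_{\SplittOpen}$, to which the $a_1^{(i)}$ restrict; hence $\{a_1^{(i)}\}_i \cup \{b_2^{(i)} : \pvec{e}^{(i)} \text{ exists}\}$ generates $A^1(\Splitt)$. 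Combining restriction to $\SplittOpen$ with the degree maps $\deg(-|_{C_i})$ gives a homomorphism $A^1(\Splitt) \to \Z^\ell \times \Z^k$ ($k$ the number of existing $\pvec{e}^{(i)}$) carrying $a_1^{(i)}$ to $(\text{$i$-th standard vector},\, \ast)$ and $b_2^{(i)}$ to $(0,\, \text{$i$-th standard vector})$, whose matrix is block lower-triangular with identity diagonal blocks, hence unimodular; so the generating set is a $\Z$-basis. The main obstacle is the multiplicity-one input $m_i = 1$: the localization and classifying-stack ingredients are formal and the Chern-class computations are bounded, but verifying that the curve $C_i$ meets $\mathbf{\Sigma}_{\pvec{e}^{(i)}}$ transversally and avoids all deeper strata — i.e., controlling the Harder--Narasimhan subquotients of the universal bundle along $C_i$ — is the one step requiring real care.
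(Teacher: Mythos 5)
Your proposal is correct, but it reaches the two key inputs by a genuinely different route than the paper. For the identity $[\mathbf{\Sigma}_{\pvec{e}^{(i)}}] = b_2^{(i)}$, the paper simply quotes the proof of \cite[Proposition 9.1]{LLV}, where the stratum is shown to be cut out scheme-theoretically --- hence with multiplicity one built in --- by an explicit section of the line bundle $\wedge^{2s_i}\pi_*(\cN_i(-f_i)\otimes H^0(\bP^1,\cO(1)))^{\vee}\otimes\wedge^{2s_i}\pi_*\cN_i(-f_i+1)$, whose first Chern class is $c_1(\pi_*\cN_i(-f_i+1)) - 2c_1(\pi_*\cN_i(-f_i)) = b_2^{(i)}$. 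You use only that \emph{some} determinantal section cuts out a divisor supported on the stratum, and then recover multiplicity one from the test curve $C_i$; this is extra work but makes the argument self-contained. For linear independence, the paper pulls back along the maps $\cB_{s_i,0}\to\Splitt$ induced by $\cN(f_i)\oplus\bigoplus_{j\neq i}\cO(f_j^{s_j})$ and cites \cite{Larson2023} for the freeness of $a_1, a_2'$ in the rational Picard group of $\cB_{s_i,0}$; your curves $C_i$ are in effect explicit rational curves mapping into the images of those test maps, so you replace that citation by the unimodular-matrix computation. Two remarks. First, the transversality you flag at the end as ``the one step requiring real care'' is not actually needed: your relation $1 = m_i\deg(\cO(\mathbf{\Sigma}_{\pvec{e}^{(i)}})|_{C_i})$ with both factors integers $\geq 1$ already forces both to equal $1$, and $C_i$ avoids all deeper strata by construction, since you computed the fiberwise splitting type on every slice ($\vec{e}$ away from $p$, exactly $\pvec{e}^{(i)}$ at $p$). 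Second, the step that does deserve an explicit sentence is the identification of the pullback of the universal flag along $C_i$ with the evident flag on $\mathcal{F}\oplus\bigoplus_{j\neq i}\cO(f_j^{s_j})$ --- it follows from the uniqueness of the flag over $\bZ_{\vec{e}}$, where $\SplittNye\to\Splitt$ is an isomorphism --- because the classes $b_2^{(j)}$ are defined through the $\cN_j$'s and your computation $\deg(b_2^{(j)}|_{C_i})=\delta_{ij}$ depends on knowing what those pull back to.
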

\begin{proof}
By Lemma \ref{lem:GRRCalc},
\begin{align*}
	c_1(\pi_* \cN_i(-f_i)) &= a_1^{(i)} - b_2^{(i)}, \\ 
	c_1(\pi_* \cN_i(-f_i+1)) &= 2a_1^{(i)} - b_2^{(i)}.
\end{align*}
Moreover, as explained in the proof of \cite[Proposition 9.1]{LLV}, $\mathbf{\Sigma}_{\pvec{e}^{(i)}}$ is Cartier in $\bZ_{\vec{e}}$ and cut out by a section of $\wedge^{2s_i} \pi_*(\cN_i(-f_i) \otimes H^0(\bP^1, \cO(1)))^{\vee} \otimes \wedge^{2s_i} \pi_* \cN_i(-f_i+1)$. In particular, 
\[
	[\mathbf{\Sigma}_{\pvec{e}^{(i)}}] = c_1(\pi_* \cN_i(-f_i+1)) - 2c_1(\pi_* \cN_i(-f_i)) = b_2^{(i)}
\]
as desired.

By Lemma 3.1 of \cite{CanningLarson}, we know that $A^1(\mathbf{\Sigma}_{\vec{e}})$ is freely generated by $c_1(\pi_*\cN_i(-f_i))|_{\SplittOpen}$.
By excision, it follows that $A^1(\Splitt)$ is generated by
lifts of $c_1(\pi_*\cN_i(-f_i))|_{\SplittOpen}$ from $\SplittOpen$ to $\Splitt$ and the classes of these codimension one strata. The $a_1^{(i)}$ classes certainly lift $c_1(\pi_*\cN_i(-f_i))$, and we just saw that when $\pvec{e}^{(i)}$ exists, then $b_2^{(i)}$ is exactly the class of that stratum. Thus it remains to show that there are no relations between the $b_2^{(i)}$'s. Consider the map $\cB_{s_i, 0} \to \Splitt$ induced by the vector bundle $\cN(f_i) \oplus \oplus_{j \neq i} \cO(f_j^{s_j})$, where $\cN$ is the universal vector bundle on $\cB_{s_i, 0} \times \bP^1$. Under this map, the classes $a_1^{(j)}, b_2^{(j)}$ all pull back to zero for $j \neq i$, and $a_1^{(i)}, b_2^{(i)}$ pull back to the classes $a_1, a_2'$ as defined in \cite{Larson2023}, which are free generators of the (rational) Picard group of $\cB_{s_i, 0}$. In particular, if there were any relation $\sum_{j = 1}^\ell (c_j b_2^{(j)} + d_j a_1^{(j)}) = 0$ in $A^1(\Splitt)$, then it must be the case that $c_i a_2' + d_i a_1 = 0$ in the rational Chow ring of $\cB_{s_i, 0}$ for all $1 \leq i \leq \ell$, but that implies that $c_i = d_i = 0$. So in fact there cannot be any relations and the $a_1^{(i)}$ and $b_2^{(i)}$'s form a basis.
\end{proof}

\subsection{Proof of Theorem \ref{thm:excision}}
Let $\pvec{e}' = ((-M)^{r-1}, D)$, such that $D - (r-1)M = d$, and $D$ is large enough so that $\pvec{e}' < \vec{e}$. Let $\cE$ be the universal bundle on $\vbst \times \bP^1$. We will also use $\cE$ to denote the pullback of the universal bundle on $\vbst \times \bP^1$ to various spaces $U \times \bP^1$ when there is an obvious map $U \to \vbst$. 
When there is an obvious map $U \to \Splitt$, we also omit the pullback from our notation, writing $a_1^{(i)}, b_2^{(i)}$ to denote the pullbacks of the universal classes on $\Splitt$.
We will write $U_{\pvec{e}'} = H^1 \cEnd(\cO(\pvec{e}')) = \Ext^1(O(D), \cO(-M)^{\oplus r-1})$. Let us recall from Section 2.2 of \cite{Lin2025} how to construct the affine space $U_{\pvec{e}'}$ from $\vbst$: 
\begin{enumerate} 
	\item First we restrict to the open $U_{\pvec{e}'}^{(2)}$ in $\vbst$ where $\cE$ splits as $\pvec{e}'' \geq \pvec{e}'$.
	\item Over this open, we form the pushforward $V = \pi_* \cHom(\cE, \cO(D))$, which is a vector bundle.
	\item We excise the locus where the corresponding map $\phi: \cO(\pvec{e}'') \to \cO(D)$ is not surjective, and the locus where the kernel is not perfectly balanced, and obtain an open locus $U^{(1)}_{\pvec{e}'}$. 
	\item On $U_{\pvec{e}'}^{(1)}  \times \bP^1$, there is a universal surjection $\cE \to \cO(D)$ so that fiberwise the kernel is perfectly balanced. Let $\cK = \ker(\cE \to \cO(D))$. The last step now is to form the principal $GL_{r-1}$-bundle $U_{\pvec{e}'} = U^{(0)}_{\pvec{e}'} = \Isom(\cO(-M)^{r-1},\cK)$ which frames $\cK$. 
\end{enumerate}

\begin{figure}
\[
\begin{tikzcd}
U_{\pvec{e}'}^{(0)} \arrow[d, "GL_{r-1}\text{-bundle}"] & & \\ 
U_{\pvec{e}'}^{(1)}  \arrow[r, hook, open] & {V = \pi_* \cHom(\cE, \cO(D))} \arrow[d, "\text{vector bundle}"] &       \\ 
& U_{\pvec{e}'}^{(2)} = \bigcup_{\pvec{e}'' \geq \pvec{e}'} \mathbf{\Sigma}_{\pvec{e}''} \arrow[r, hook, open] & \vbst
\end{tikzcd}
\]
\caption{Building $\Ext^1(O(D), \cO(-M)^{\oplus r-1})$ from $\vbst$}
\end{figure}

After base change from $U_{\pvec{e}'}^{(2)}$ to $\Splitt^{\circ} = \cup_{\pvec{e}' \leq \vec{f} \leq \vec{e}} \mathbf{\Sigma}_{\vec{f}}$, we note that this construction is also a recipe for building $\splitt$ from $\Splitt^{\circ}$. 

Luckily, this construction is pretty amenable to tracking how $A^1(\splitt)$ compares to $A^1(\Splitt)$. Forming a vector bundle does not affect Chow, and as far as $A^1$ is concerned, forming a $GL$-bundle amounts to setting $c_1$ of the corresponding vector bundle to zero. We have also excised two closed loci in this process, and we must take note of whether they contain codimension one components, and if so, we must remember to excise them in the class group computation. The rest of this subsection will follow this recipe to prove Theorem \ref{thm:excision}.

Let $Z_{\ns}$ denote the closed locus in $V = \pi_* \cHom(\cE, \cO(D))$ of those maps $\cO(\vec{f}) \to \cO(D)$ which are not surjective. We label the relevant maps as follows:\[
	\begin{tikzcd}
V \times \bP^1 \arrow[r, "\tilde{p}"] \arrow[d, "\pi'"] & \Splitt^{\circ} \times \bP^1 \arrow[d, "\pi"] \\
V \arrow[r, "p"]                                        & \Splitt^{\circ}                              
\end{tikzcd}
\]

\begin{lemma} 
The closed locus $Z_{\ns}$ is irreducible of codimension $r-1$. In particular, $Z_{\ns}$ is of codimension $1$ if and only if $r = 2$. In that case,
\[
[Z_{\ns}] = \pi'_* c_2(\cHom(\cE, \cO(D))) = 
\begin{cases}
-(f_1+ M)a_1^{(1)} - (f_2 + M)a_1^{(2)} & \text{if } \vec{e} = (f_1, f_2), f_1 < f_2; \\ 
-(f+ M)a_1 + b_2 & \text{if }\vec{e} = (f, f).
\end{cases}
\]
Note that in the latter case, since $m = 1$, we use $a_1, b_2$ as a shorthand for $a_1^{(1)}, b_2^{(1)}$.
\end{lemma}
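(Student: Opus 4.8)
The plan is to realize $Z_{\ns}$ as the birational image of the total space of a vector bundle over $\Splitt^{\circ}\times\bP^1$, and then compute $[Z_{\ns}]$ by pushing forward a top Chern class. For $D$ sufficiently large --- as holds under the running hypothesis $\pvec{e}' < \vec{e}$ --- the sheaf $\cHom(\cE,\cO(D))$ is globally generated on each fiber of $\pi\colon \Splitt^{\circ}\times\bP^1 \to \Splitt^{\circ}$, so $V = \pi_*\cHom(\cE,\cO(D))$ is a vector bundle of rank $rD+r-d$ and the evaluation map $\pi^*V \to \cHom(\cE,\cO(D))$ on $\Splitt^{\circ}\times\bP^1$ is surjective, with kernel a subbundle $\mathcal{W}$ of rank $rD-d$. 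Identify $V\times\bP^1$ with the total space of $\pi^*V$ over $\Splitt^{\circ}\times\bP^1$, with structure map $\tilde{p}$ and projection $\pi'$ to $V$. Then the incidence locus
\[
	\tilde{Z} = \{(x,\phi,q) : \phi \text{ fails to be surjective at } q\}
\]
is precisely the total space of the subbundle $\mathcal{W}$, equivalently the vanishing scheme of the universal section of $\tilde{p}^*\cHom(\cE,\cO(D))$. Consequently $\tilde{Z}$ is integral (it is the total space of a vector bundle over $\Splitt^{\circ}\times\bP^1$, and $\Splitt^{\circ}$ is integral since it contains the dense open $\SplittOpen$), it has codimension $r$ in $V\times\bP^1$ (the codimension of the total space of a corank-$r$ subbundle), and, being cut out in this expected codimension, it has fundamental class
\[
	[\tilde{Z}] = c_r(\tilde{p}^*\cHom(\cE,\cO(D))) \in A^r(V\times\bP^1),
\]
where I use that $\tilde{p}^*$ is an isomorphism on Chow groups because $\tilde{p}$ is a vector bundle.

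Next I would pass from $\tilde{Z}$ to $Z_{\ns}$. Since a map to a line bundle fails to be surjective exactly where it vanishes, $\pi'(\tilde{Z}) = Z_{\ns}$, and I would check that $\pi'|_{\tilde{Z}}\colon\tilde{Z}\to Z_{\ns}$ is birational: a general point of $\tilde{Z}$ lies over a point of $\SplittOpen$, where $\cE\cong\cO(\vec{e})$, and a general map $\cO(\vec{e})\to\cO(D)$ vanishing at $q$ has coordinate functions of the form $(t-q)h_i$ with the $h_i$ general of positive degree (using $D\gg 0$), hence sharing no further common zero; so a general non-surjective $\phi$ vanishes at a single point and its fiber under $\pi'|_{\tilde{Z}}$ is a reduced point. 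Comparing $\dim\tilde{Z} = \dim\Splitt^{\circ}+1+(rD-d)$ with $\dim V = \dim\Splitt^{\circ}+(rD+r-d)$, it follows that $Z_{\ns}$ is irreducible of codimension $r-1$ in $V$, hence of codimension one exactly when $r=2$. In that case $[Z_{\ns}] = \pi'_*[\tilde{Z}] = \pi'_*c_2(\tilde{p}^*\cHom(\cE,\cO(D)))$; flat base change along the cartesian square formed by $\pi,\pi',p,\tilde{p}$ gives $\pi'_*\tilde{p}^* = p^*\pi_*$, so under the isomorphism $p^*\colon A^*(\Splitt^{\circ})\xrightarrow{\sim}A^*(V)$ this class becomes $\pi_*c_2(\cHom(\cE,\cO(D)))$.

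Finally, for $r=2$ it remains to compute $c_2(\cHom(\cE,\cO(D)))$ on $\Splitt^{\circ}\times\bP^1$ and push it forward along $\pi$. Since $\cHom(\cE,\cO(D)) = \cE^{\vee}\otimes\cO(D)$ with $\cE$ of rank two, one has $c_1(\cHom(\cE,\cO(D))) = -c_1(\cE)+2Dz$ and $c_2(\cHom(\cE,\cO(D))) = c_2(\cE) - D\,c_1(\cE)\,z$ (using $z^2 = 0$). I would then expand $c_1(\cE)$ and $c_2(\cE)$ using the filtration of $\cE$ by the $\cN_i$ and the defining relations $c_k(\cN_i(-f_i)) = \pi^*a_k^{(i)}+\pi^*b_k^{(i)}z$ with $b_1^{(i)}=0$, handling $\vec{e}=(f_1,f_2)$ (where $\cE$ is an extension of $\cN_1$ by $\cN_2$, both line bundles) and $\vec{e}=(f,f)$ (where $\cN_1=\cE$) separately, and then push forward using $\pi_*(\pi^*\alpha\cdot z)=\alpha$ and $\pi_*(\pi^*\beta)=0$, substituting $D=d+M$. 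This yields the two displayed formulas. The one step requiring genuine care is the birationality of $\pi'|_{\tilde{Z}}$; the rest is bookkeeping with Chern classes.
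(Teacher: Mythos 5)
Your proposal is correct and follows essentially the same route as the paper: the paper also realizes the incidence locus (its $Z'$, your $\tilde Z$) as the total space of $\ker(\operatorname{ev})\subset\pi^*V$ over $\Splitt^{\circ}\times\bP^1$, deduces irreducibility and codimension $r$ from that, pushes forward $c_r(\cHom(\cE,\cO(D)))$ using that a generic non-surjective map drops rank at a single point, and then evaluates the $r=2$ cases via the filtration by the $\cN_i$ and the relations $c_k(\cN_i(-f_i))=\pi^*a_k^{(i)}+\pi^*b_k^{(i)}z$. Your extra care with the birationality of $\pi'|_{\tilde Z}$ and the explicit dimension count fills in details the paper only asserts, and your final Chern-class bookkeeping (with $D=d+M$ when $r=2$) reproduces both displayed formulas.
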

\begin{proof}
Let $Z'$ be the locus of points in $V \times \bP^1$ where the universal map $\cE \to \cO(D)$ is zero. By our choice of $D$, the evaluation map 
\[
	\operatorname{ev}: \pi^* V = \pi^* \pi_* \cHom(\cE, \cO(D)) \to \cHom(\cE, \cO(D))
\]
is surjective, and $Z'$ is in fact the total space of the vector bundle $W = \ker (\operatorname{ev})$ on $\mathbf{\overline{\Sigma}}_{\vec{e}}^{\circ} \times \bP^1$. In particular $Z'$ must be irreducible of codimension $r = \rk \cHom(\cE, \cO(D))$ in $V \times \bP^1$. A map $\cO(\vec{f}) \to \cO(D)$ is not surjective if it is zero at some point of $\bP^1$. Therefore, the nonsurjective locus $Z_{\ns} \subset V$ is the image of $Z'$ in $V$. Since a generic nonsurjective map $\cO(\vec{f}) \to \cO(D)$ drops rank at exactly one point, $[Z_{\ns}] = \pi_*' c_{r}(\cHom(\cE, \cO(D)))$. As this class has codimension $r-1$, it has codimension $1$ if and only if $r = 2$. 

Now suppose that $\vec{e} = (f_1, f_2)$ where $f_1 < f_2$. Then $\bZ_{\vec{e}} = \SplittOpen$ and we have the short exact sequence over $\bZ_{\vec{e}}$
\[
	0 \to \cN_2 \to \cE \to \cN_1 \to 0.
\]
Note that in this case, $D = f_1 + f_2 + M$. By Lemma \ref{lem:GRRCalc}, 
\begin{align*}
	\pi_*' c_2(\cHom(\cE, \cO(D))) &= \pi_*' \big ( (c_1(\cN_1^{\vee}(f_1)) + (D-f_1)z)(c_1(\cN_2^{\vee}(f_2)) + (D-f_2)z) \big ) \\ 
	&= \pi_*' \left ( (-\pi^* a_1^{(1)} + (f_2+M)z)(-\pi^* a_1^{(2)} + (f_1+M)z) \right ) \\ 
	&= -(f_1+M)a_1^{(1)} - (f_2+M)a_1^{(2)}.
\end{align*}
Lastly, suppose that $\vec{e} = (f, f)$. Then $m = 1$, $\cE = \cN_1$, $M = D-2f$, and we get  
\begin{align*}
	\pi_*' c_2(\cHom(\cE, \cO(D))) &= \pi_*' \big ( c_2(\cE^{\vee}(f)) + c_1(\cE^{\vee}(f))(D-f)z \big ) \\ 
	&= \pi_*' \left ( (\pi^* a_2 + \pi^* b_1 . z) -\pi^* a_1 . (f+M)z) \right ) \\
	&= -(f+M)a_1 + b_1.
\end{align*}
\end{proof}

Let $Z_{\ubal}$ denote the closed locus in $\pi_* \cHom(\cE, \cO(D)) \setminus Z_{\ns}$ of those surjective maps $\cO(\vec{f}) \to \cO(D)$ whose kernel is not isomorphic to $\cO(-M)^{\oplus (r-1)}$. Note that $Z_{\ubal}$ is empty if $r = 2$.

\begin{lemma} Suppose that $r \geq 3$. Then $Z_{\ubal}$ is of pure codimension $1$ and 
\[
	[Z_{\ubal}] = \sum_{i = 1}^\ell ((-f_i-M)a_1^{(i)} + b_2^{(i)}) \in A^1(V \setminus Z_{\ns}).
\]
\end{lemma}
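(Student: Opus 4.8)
The plan is to realize $Z_{\ubal}$ as the degeneracy locus of a map between two vector bundles of equal rank, and then run a Thom--Porteous computation fed by Lemma~\ref{lem:GRRCalc}, in the spirit of the proof of Proposition~\ref{prop:basisForA1}. On $(V\setminus Z_{\ns})\times\bP^1$ we have the universal short exact sequence $0\to\cK\to\cE\to\cO(D)\to 0$, with $\cK$ fiberwise of rank $r-1$ and degree $-(r-1)M$. The key observation is that a bundle $\cG$ on $\bP^1$ of rank $r-1$ and degree $-(r-1)M$ is perfectly balanced if and only if $R\pi_*\cG(M-1)=0$: if $\cG=\bigoplus\cO(a_i)$ then $h^1(\cG(M-1))=\sum_i\max(0,-M-a_i)$, which vanishes exactly when all $a_i\geq -M$, and with $\sum a_i=-(r-1)M$ this forces every $a_i=-M$. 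Hence $Z_{\ubal}$ is precisely the support of $R^1\pi_*\cK(M-1)$.

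Next I would twist the universal sequence by $M-1$ and push forward. Since $\pvec{e}'\leq\vec{e}$, every splitting type $\vec f$ of $\cE$ occurring over $\Splitt^{\circ}$ has $f_1\geq(\pvec{e}')_1=-M$, so $\cE(M-1)$ is fiberwise without higher cohomology and $R^1\pi_*\cE(M-1)=0$; likewise $R^1\pi_*\cO(D+M-1)=0$. The pushforward long exact sequence therefore collapses to
\[
0\to\pi_*\cK(M-1)\to\pi_*\cE(M-1)\xrightarrow{\ \psi\ }\pi_*\cO(D+M-1)\to R^1\pi_*\cK(M-1)\to 0,
\]
and a Riemann--Roch count (using $d=D-(r-1)M$) shows $\pi_*\cE(M-1)$ and $\pi_*\cO(D+M-1)$ both have rank $D+M$. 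Thus $Z_{\ubal}$ is the zero scheme of $\det\psi$, a section of $\det(\pi_*\cE(M-1))^{\vee}\otimes\det\pi_*\cO(D+M-1)$. As $V\setminus Z_{\ns}$ is integral and $\psi$ is an isomorphism wherever $\cK$ is perfectly balanced (e.g. over $\cE=\cO(\pvec{e}')=\cO(-M)^{\oplus r-1}\oplus\cO(D)$ with the projection to $\cO(D)$), the section $\det\psi$ is a nonzerodivisor, so $Z_{\ubal}$ has pure codimension $1$ by Krull's theorem; it is nonempty once $r\geq 3$ (take instead a surjection factoring through $\cO(-M)^{\oplus r-1}\twoheadrightarrow\cO(D)$, whose kernel then contains $\cO(D)$). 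Finally, at a general point of $Z_{\ubal}$ the kernel has splitting type $\cO(-M-1)\oplus\cO(-M)^{r-3}\oplus\cO(-M+1)$, so $R^1\pi_*\cK(M-1)$ is $1$-dimensional there, $Z_{\ubal}$ is generically reduced, and by Thom--Porteous $[Z_{\ubal}]=c_1(\pi_*\cO(D+M-1))-c_1(\pi_*\cE(M-1))$.

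To finish, I would evaluate these classes in $A^1(V\setminus Z_{\ns})\cong A^1(\Splitt)$. The bundle $\pi_*\cO(D+M-1)$ is pulled back from $\bP^1$, hence trivial, so its $c_1$ vanishes. Restricting to the preimage of $\bZ_{\vec{e}}$ (which preserves $A^1$), the filtration of $\cE$ with subquotients $\cN_i$ gives $[\cE(M-1)]=\sum_i[\cN_i(M-1)]$ in $K$-theory, hence $c_1(R\pi_*\cE(M-1))=\sum_i c_1(R\pi_*\cN_i(M-1))$; Lemma~\ref{lem:GRRCalc} applied to $\cN_i$ with twist $f_i+M-1$ gives $c_1(R\pi_*\cN_i(M-1))=(f_i+M)a_1^{(i)}-b_2^{(i)}$. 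Therefore
\[
[Z_{\ubal}]=-\sum_{i=1}^{\ell}\bigl((f_i+M)a_1^{(i)}-b_2^{(i)}\bigr)=\sum_{i=1}^{\ell}\bigl((-f_i-M)a_1^{(i)}+b_2^{(i)}\bigr),
\]
as claimed.

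I expect the main obstacle to be controlling the scheme structure on $Z_{\ubal}$ carefully enough to conclude generic reducedness (equivalently, that $\det\psi$ has multiplicity one along each component) and nonemptiness --- both of which genuinely use that $D$ is large. If this bookkeeping is awkward, a cleaner variant is to identify $Z_{\ubal}$ with the preimage of the unique codimension-one splitting locus $\mathbf{\overline{\Sigma}}_{\pvec{g}}\subseteq\cB_{r-1,-(r-1)M}$, where $\pvec{g}=(-M-1,(-M)^{r-3},-M+1)$ exists because $r-1\geq 2$, under the classifying map $\phi\colon V\setminus Z_{\ns}\to\cB_{r-1,-(r-1)M}$ of $\cK$. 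That map is smooth (its fibers are quotients of the affine spaces $\Ext^1(\cO(D),\cG)$, which have constant dimension since $D$ is large), so $Z_{\ubal}=\phi^{-1}(\mathbf{\overline{\Sigma}}_{\pvec{g}})$ is automatically reduced of pure codimension one with $[Z_{\ubal}]=\phi^*[\mathbf{\overline{\Sigma}}_{\pvec{g}}]=b_2(\cK)$ by Proposition~\ref{prop:basisForA1}; then $b_2(\cK)=-c_1(R\pi_*\cK(M-1))$ by Lemma~\ref{lem:GRRCalc}, and the short exact sequence finishes the computation exactly as above.
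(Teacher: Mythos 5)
Your argument is correct and is essentially the paper's: both identify $Z_{\ubal}$ with the support of $R^1\pi_*\cK(M-1)$, present that sheaf as the cokernel of a map of vector bundles of equal rank whose determinant is a nonzero section of a line bundle (giving pure codimension one on an integral space), and then evaluate the first Chern class via the sequence $0\to\cK\to\cE\to\cO(D)\to 0$ and Lemma~\ref{lem:GRRCalc}. The only real difference is the choice of two-term presentation --- the paper twists down the Str\o mme sequence for $\cK(S)$, while you push forward the defining extension twisted by $M-1$, which is a bit more direct and yields the same class; and, like the paper, your main argument does not fully justify that $\det\psi$ vanishes to order exactly one along $Z_{\ubal}$ (corank one alone does not imply this), a point your smooth-pullback variant at the end would cleanly settle.
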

\begin{proof}
Since we are always working with a finite type stack, there exists $S$ sufficiently large such that $\cK(S)$ is globally generated. The Str\o mme sequence \cite[Proposition 1.1]{Stromme1987} tells us that
\[
	0 \to \pi^* (\pi_* \cK(S-1))(-1) \to \pi^* \pi_* \cK(S) \to \cK(S) \to 0.
\]
Twisting this down by $\cO(-S+M-1)$ and pushing forward, we see that
\[
	\pi_* \cK(S-1) \otimes H^1 \cO_{\bP^1}(-S+M-2) \to \pi_* \cK(S) \otimes H^1 \cO_{\bP^1}(-S+M-1) \to R^1 \pi_* \cK(M-1) \to 0
\]
is exact. The locus where $\cK$ is not perfectly balanced is precisely the support of $R^1 \pi_* \cK(M-1)$.

In the resolution above of $R^1 \pi_* \cK(M-1)$ by a map of vector bundles, the two vector bundles involved both have rank $(r-1)(S-M)(S-M+1)$. So the support of $R^1 \pi_* \cK(M-1)$ is cut out by the section of a line bundle. Since it is possible for $\cK$ to split as perfectly balanced, this section cannot be zero, and since we are working with an integral stack, this shows that $Z_{\ubal}$ is of pure codimension one. The class of $Z_{\ubal}$ is just $(S-M)c_1(\pi_* \cK(S)) - (S-M+1)c_1(\pi_* \cK(S-1))$. 

Now, recall that on $(V \setminus Z_{\ns}) \times \bP^1$,
\[
	0 \to \cK \to \cE \to \cO(D) \to 0,
\]
and as such we have $c_1(\pi_* \cK(S)) = c_1(\pi_* \cE(S))$ and $c_1(\pi_* \cK(S-1)) = c_1(\pi_* \cE(S-1))$. By Lemma \ref{lem:GRRCalc}, 
\begin{align*}
[Z_{\ubal}] & = (S-M)c_1(\pi_* \cK(S)) - (S-M+1)c_1(\pi_* \cK(S-1)) \\ 
&= (S-M)c_1(\pi_* \cE(S)) - (S-M+1)c_1(\pi_* \cE(S-1)) \\ 
&= \sum_{i = 1}^\ell \left ( (S-M)c_1(\pi_* \cN_i(S)) - (S-M+1)c_1(\pi_* \cN_i(S-1)) \right ) \\ 
&= \sum_{i = 1}^\ell \left ( (S-M)((f_i+S+1)a_1^{(i)}-b_2^{(i)}) - (S-M+1)((f_i+S)a_1^{(i)}-b_2^{(i)}) \right ) \\ 
&= \sum_{i = 1}^\ell ((-f_i-M)a_1^{(i)} + b_2^{(i)}).
\end{align*}
\end{proof}

\begin{lemma} 
$Z_{\ubal}$ is irreducible.
\end{lemma}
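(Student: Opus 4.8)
We may assume $r\ge 3$, since $Z_{\ubal}$ is empty when $r=2$. Write $\vec g_0=(-M-1,(-M)^{r-3},-M+1)$ for the unique codimension‑one splitting type of rank $r-1$ and degree $-(r-1)M$; as a closed substack of $\cB_{r-1,-(r-1)M}$, $\mathbf{\overline\Sigma}_{\vec g_0}$ is exactly the locus of bundles that are not perfectly balanced. Under the kernel map $q\colon V\setminus Z_{\ns}\to\cB_{r-1,-(r-1)M}$, $(\cE,\phi)\mapsto[\ker\phi]$, we have $Z_{\ubal}=q^{-1}(\mathbf{\overline\Sigma}_{\vec g_0})$, and the plan is to show that $Z_{\ubal}=\overline{q^{-1}(\mathbf{\Sigma}_{\vec g_0})}$ by realizing a dense open of $Z_{\ubal}$ as a quotient of an open subvariety of a vector space by a connected group.

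First I would reduce to the open stratum $\SplittOpen\subseteq\Splitt^{\circ}$. Since $p\colon V\to\Splitt^{\circ}$ is a vector bundle over the integral stack $\Splitt^{\circ}$ and $Z_{\ubal}$ is pure of codimension one in $V\setminus Z_{\ns}$ by the previous lemma, any component of $Z_{\ubal}$ whose image in $\Splitt^{\circ}$ avoids $\SplittOpen$ must, for dimension reasons, equal $p^{-1}(\mathbf{\overline\Sigma}_{\vec f})$ for some codimension‑one degeneration $\vec f$ of $\vec e$ inside $\Splitt$; this is impossible because $\cO(\pvec e')$ lies in every such $\mathbf{\overline\Sigma}_{\vec f}$ and admits the projection onto $\cO(D)$ with perfectly balanced kernel, so $p^{-1}(\mathbf{\overline\Sigma}_{\vec f})\not\subseteq Z_{\ubal}$. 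Hence every component of $Z_{\ubal}$ meets $p^{-1}(\SplittOpen)$ densely. As $\SplittOpen=B\operatorname{Aut}(\cO(\vec e))$ with $\operatorname{Aut}(\cO(\vec e))$ connected and $p^{-1}(\SplittOpen)=[\Hom(\cO(\vec e),\cO(D))/\operatorname{Aut}(\cO(\vec e))]$, it suffices to prove that
\[
	N:=\{\phi\in\Hom(\cO(\vec e),\cO(D)):\phi\ \text{surjective},\ \ker\phi\ \text{not perfectly balanced}\}
\]
is irreducible; note that $N$ inherits purity of codimension one in $\Hom(\cO(\vec e),\cO(D))$ from $Z_{\ubal}$, so $\dim N=h^0(\cO(\vec e)^{\vee}(D))-1$.

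Next I would stratify $N=\bigsqcup_{\vec g}N_{\vec g}$ by the isomorphism type of the kernel, $N_{\vec g}=\{\phi\in N:\ker\phi\cong\cO(\vec g)\}$, where $\vec g$ runs over the finitely many non‑perfectly‑balanced types with nonempty stratum (a corank‑one subbundle of $\cO(\vec e)$ has bounded splitting type). A surjection with kernel $\cong\cO(\vec g)$ is the same as a saturated corank‑one subbundle of $\cO(\vec e)$ isomorphic to $\cO(\vec g)$ (the quotient is automatically $\cong\cO(D)$) together with an identification of the quotient with $\cO(D)$, so $N_{\vec g}$ is a $\bbG_m$‑torsor over the quotient of the open subvariety of saturated injections $\cO(\vec g)\hookrightarrow\cO(\vec e)$ inside $\Hom(\cO(\vec g),\cO(\vec e))$ by the connected group $\operatorname{Aut}(\cO(\vec g))$; hence each nonempty $N_{\vec g}$ is irreducible, of dimension $h^0(\cO(\vec g)^{\vee}\otimes\cO(\vec e))-h^0(\cEnd\cO(\vec g))+1$. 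For $\vec g=\vec g_0$ one has $h^1(\cO(\vec g_0)^{\vee}\otimes\cO(\vec e))=0$ (for $D$ large all summands have nonnegative degree) and $h^0(\cEnd\cO(\vec g_0))=(r-1)^2+1$, and one checks this forces $\dim N_{\vec g_0}=\dim N$; moreover $N_{\vec g_0}\neq\emptyset$ by a Bertini‑type argument, since a general map $\cO(\vec g_0)\to\cO(\vec e)$ is a saturated injection (its maximal minors are sections of line bundles of large degree with no common zero). The computational crux is the estimate $h^1(\cO(\vec g)^{\vee}\otimes\cO(\vec e))<u(\vec g)-1$ for every other relevant $\vec g$, equivalently $\dim N_{\vec g}<\dim N$ unless $\vec g=\vec g_0$; this is a direct inequality among splitting types, valid once $D$ is large because the entries of such a $\vec g$ are bounded in terms of $\vec e$ (so the higher cohomology above is bounded independently of $M$) while $\vec g\neq\vec g_0$ forces $u(\vec g)$ to be large whenever those twists do acquire higher cohomology.

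Granting the crux, $N_{\vec g_0}$ is the unique stratum of dimension $\dim N$ and is open in $N$ (the condition $\ker\phi\cong\cO(\vec g_0)$ is the open condition on $N$ that $\ker\phi$ is not a further degeneration of $\cO(\vec g_0)$), so, $N$ being pure of dimension $\dim N_{\vec g_0}$, every component of $N$ has $N_{\vec g_0}$ as its dense stratum and therefore equals $\overline{N_{\vec g_0}}$. Thus $N=\overline{N_{\vec g_0}}$ is irreducible, and hence so is $Z_{\ubal}$. The main obstacle is organizational rather than computational: it is the purity statement of the previous lemma that pins the generic point of $Z_{\ubal}$ to lie over $\SplittOpen$ with kernel of the minimal degeneration type $\vec g_0$, and only with that in hand does the dimension comparison among the strata $N_{\vec g}$ — the single genuinely quantitative input, of a kind already implicit in the dimension theory of splitting loci — complete the proof.
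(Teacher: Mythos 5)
Your overall architecture is the same as the paper's: stratify the unbalanced locus by the splitting type $\vec{g}$ of the kernel, observe that each stratum is an irreducible quotient of an open subset of a Hom-space by a connected group of codimension $u(\vec{g})-h^1\cHom(\cO(\vec{g}),\cO(\vec{e}))$, and conclude by showing that only $\vec{g}_0=(-M-1,(-M)^{r-3},-M+1)$ achieves codimension one. Your reduction to the open stratum $\SplittOpen$ (via the point $\cO(\pvec{e}')\to\cO(D)$ with balanced kernel lying over each codimension-one $\mathbf{\Sigma}_{\pvec{e}^{(i)}}$) is a legitimate variant of the paper's observation that the kernel is generically balanced over $\pi_*\cHom(\cO(\pvec{e}^{(i)}),\cO(D))$, and your closing argument from purity plus uniqueness of the top-dimensional stratum is fine.

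The gap is that your ``computational crux'' --- the inequality $u(\vec{g})-h^1\cHom(\cO(\vec{g}),\cO(\vec{e}))\geq 2$ for every unbalanced $\vec{g}\neq\vec{g}_0$ admitting a saturated injection into $\cO(\vec{e})$ --- is exactly where the content of the lemma lives, and you have asserted it rather than proved it. The justification you sketch is also not right as stated: the entries of such a $\vec{g}$ are \emph{not} bounded in terms of $\vec{e}$ (the smallest entries are on the order of $-M$ and below, since $\deg\vec{g}=-(r-1)M$), although the quantity $h^1\cHom(\cO(\vec{g}),\cO(\vec{e}))$ is indeed bounded in terms of $\vec{e}$ alone because only entries $g_i\geq e_1+2$ contribute. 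More importantly, even if one makes your heuristic precise --- when $h^1\cHom(\cO(\vec{g}),\cO(\vec{e}))>0$ some $g_i\geq e_1+2\geq -M+2$, which by the degree constraint forces another entry $\leq -M-1$ and hence $u(\vec{g})\geq e_1+M+2$ --- the resulting comparison only closes for $M$ large relative to $\vec{e}$ (roughly $M\gtrsim r(r-1)(e_r-e_1)$), whereas the lemma and Theorem \ref{thm:excision} are stated for every $\pvec{e}'=((-M)^{r-1},D)$ with $\pvec{e}'<\vec{e}$. That weaker version would still suffice for the main theorem, where one is free to take $D\gg 0$, but it is not the statement being proved. The paper instead runs a uniform case analysis on the Hong--Larson classification of saturated subsheaves $\cO(\vec{g})\hookrightarrow\cO(\vec{e})$: in the case $g_i\leq e_i$ one writes the codimension as a sum of nonnegative terms $\delta_{ij}$ and rules out all unbalanced $\vec{g}$ with $g_{r-1}-g_1\geq 3$ and then pins down $g_{r-1}-g_1=2$ to $\vec{g}_0$; in the second Hong--Larson case one shows the codimension always exceeds one. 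You need to supply an argument of this kind (or explicitly restrict to $D\gg 0$ and carry out the estimate above) for the proof to be complete.
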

\begin{proof}
We have already shown that $Z_{\ubal}$ is of pure codimension $1$.
We can stratify $V \setminus Z_{\ns}$ by the splitting type $\vec{a}$ of $\cK$ and the splitting type $\vec{f}$ of $\cE$. Each of these strata is isomorphic to $[U_{\vec{a}, \vec{f}} / \Aut(\cO(\vec{a})) \times \Aut(\cO(\vec{f}))]$, where $U_{\vec{a}, \vec{f}} \subset \Hom(\cO(\vec{a}), \cO(\vec{f}))$ is the locus of maps with locally free cokernel. Note that each stratum is irreducible. Note also that $\pi_* \cHom(\cO(\pvec{e}^{(i)}), \cO(D))$ is irreducible of codimension $1$ in $V$, and over this space the kernel is generically balanced. So when $\vec{a}$ is not balanced and $\vec{f} = \pvec{e}^{(i)}$ for some $i$, the stratum associated to $\vec{a}$ and $\vec{f}$ must be of codimension at least $2$ in $V$. In particular, it suffices to restrict our attention to the case where $\vec{f} = \vec{e}$, and check that the only unbalanced $\vec{a}$ such that $[U_{\vec{a}, \vec{e}} / \Aut(\cO(\vec{a})) \times \Aut(\cO(\vec{e}))]$ achieves codimension $1$ in $V \setminus Z_{\ns}$ is $\vec{a} = (-M-1, -M, \dots, -M, -M+1)$. 

Note that in order for $(-M, \dots, -M, D) < (e_1, \dots, e_r)$, we must have that $D \geq e_r + \sum_{i = 2}^{r-1}(e_i-e_1)$. So if $U_{\vec{a}, \vec{e}} \neq \emptyset$, then $\deg \vec{a} \leq (r-1)e_1$. In the special case where $\vec{e}$ itself is $(f_1^{r-1}, f_2)$, we must have that $D > f_2$. So if $U_{\vec{a}, \vec{e}} \neq \emptyset$ in this case, then $\deg \vec{a} < (r-1)e_1$.

We have 
\[
	\dim [U_{\vec{a}, \vec{e}} / \Aut(\cO(\vec{a})) \times \Aut(\cO(\vec{e}))] = \hom(\cO(\vec{a}), \cO(\vec{e})) - h^0\cEnd(\cO(\vec{a})) - h^0\cEnd(\cO(\vec{e})),
\]
and the dimension of $V \setminus Z_{\ns}$ is precisely 
\[
	\chi(\cHom(\cO(\vec{a}), \cO(\vec{e}))) - \chi(\cEnd(\cO(\vec{a}))) - h^0\cEnd(\cO(\vec{e})).
\]
So the codimension of $[U_{\vec{a}, \vec{e}} / \Aut(\cO(\vec{a}))]$ is $u(\vec{a}) - h^1 \cHom(\cO(\vec{a}), \cO(\vec{e}))$.

As explained by Hong and Larson in \cite[Theorem A.1.1]{HongLarson}, in order for there to exist an injective map $\cO(\vec{a}) \to \cO(\vec{e})$ with locally free cokernel, we could have the following two scenarios:
\begin{enumerate}
	\item $a_i \leq e_i$ for $1 \leq i \leq r-1$; or
	\item there exists some $1 \leq m \leq r$ such that $a_k = e_{k+1}$ for $k \geq m$, and $a_k < e_k$ for $k < m$.
\end{enumerate}

In the first case,
\begin{align*}
u(\vec{a}) - h^1 \cHom(\cO(\vec{a}), \cO(\vec{e})) &=
\sum_{i = 1}^{r-1}\sum_{j = 1}^{i-1}
\left (
h^1 \cHom(\cO(a_i), \cO(a_j)) - 
h^1 \cHom(\cO(a_i), \cO(e_j)) 
\right )
\end{align*}
where each term 
\[
	\delta_{ij} = 
	h^1 \cHom(\cO(a_i), \cO(a_j)) - 
	h^1 \cHom(\cO(a_i), \cO(e_j)) 
\]
is nonnegative because $a_j \leq e_j$. If all $\delta_{ij} \leq 1$ and $\delta_{ij} = 1$ exactly once, then whenever $a_i \geq a_j + 3$, we have $e_j - a_j \leq 1$ and $e_j = a_j + 1$ exactly once. Moreover, when $e_j = a_j + 1$, there can be at most one $i$ with $a_i-a_j \geq 2$.

Now suppose that $a_{r-1} - a_1 \geq 3$. Then we must have that $e_1 - a_1 \leq 1$. We cannot have $a_1 = e_1$, because then $\deg \vec{a} \geq (r-1)e_1 + 3$ would be too large. If $a_1 = e_1-1$, as we said before then it must be the case that $a_i - a_1 \leq 1$ for $i < r-1$. But then $a_{r-1} - a_i \geq 2$ for $1 < i < r-1$, which implies that $e_i = a_i$ for $1 < i < r-1$ (we've used up our one chance to have $e_i = a_i + 1$). This is now again impossible because $\deg \vec{a}$ is now too large.

Now suppose that $a_{r-1} - a_1 =2$. Then again because $\deg \vec{a} \leq (r-1)e_1$, we need that $e_1 - a_1 \geq 1$. But this implies that $a_i - a_1 \leq 1$ for $1 < i < r-1$. Thus up to a shift, $\vec{a}$ is of the form $((-1)^{k}, 0^s, 1)$, where $k > 0, s \geq 0$, $k+s = r-1$, and $e_1 \geq 0$. In particular, $h^1\cHom(\cO(\vec{a}), \cO(\vec{e})) = 0$. So $u(\vec{a}) = 1$, which implies that $k = 1$ and up to a shift, we have $\vec{a} = (-1, 0^{r-2}, 1)$, which is the expected splitting type.

Lastly, we will show that in the second case, the stratum $[U_{\vec{a}, \vec{e}}/\Aut(\cO(\vec{a})) \times \Aut(\cO(\vec{e}))]$ cannot be of codimension $1$. 
Let us write $\vec{e} = (\vec{e}_-, \vec{e}_+)$, $\vec{a} = (\vec{a}_-, \vec{e}_+)$, where $\vec{e}_+ = (e_{m+1}, e_{m+2}, \dots, e_{r}) = (a_{m}, \dots, a_{r-1})$. Note that $\vec{e}_-$ has length $m$ whereas $\vec{a}_-$ has length $m-1$.
\begin{align*}
& u(\vec{a}) - h^1 \cHom(\cO(\vec{a}), \cO(\vec{e})) \\ = \ & 
\left (
u(\vec{a}_-) - 
h^1 \cHom(\cO(\vec{a}_-), \cO(\vec{e}_-)
\right ) + 
\left(
	\ext^1(\cO(\vec{e}_+), \cO(\vec{a}_-)) -
	\ext^1(\cO(\vec{e}_+), \cO(\vec{e}_-))
\right).
\end{align*}
The first part is nonnegative for the same reason as in the first case. The second part can be computed directly as follows:
\begin{align*}
	& \ext^1(\cO(\vec{e}_+), \cO(\vec{a}_-)) -
	\ext^1(\cO(\vec{e}_+), \cO(\vec{e}_-)) \\ 
	= \ & 
	- \chi(\cHom(\cO(\vec{e}_+), \cO(\vec{a}_-))) + \chi(\cHom(\cO(\vec{e}_+), \cO(\vec{e}_-))) \\ 
	= \ & 
	- \left(
		(r-m)(m-1) + (r-m) \deg \vec{a}_- - (m-1) \deg(\vec{e}_+)
	\right)
	+ 
	\left(
		(r-m)m + (r-m) \deg \vec{e}_- - m \deg(\vec{e}_+)
	\right)	 \\ 
	= \ & (r-m)(1+D) - \deg \vec{e}_+. 
\end{align*}
But since $1+D > e_r$, the quantity $(r-m)(1+D) - \deg \vec{e}_+$ must be positive and it equals $1$ if and only if $m = r-1$ and $e_r = D$. But this is only possible if $-M = e_1 = e_2 = \cdots = e_{r-1}$, which just means that $\pvec{e}' = \vec{e}$, which is a contradiction.
\end{proof}

If we let $\alpha_1 = \sum_{i=1}^\ell \left (-(f_i+M)a_1^{(i)} + b_2^{(i)} \right ) $, then we see that
\[
	\twocase{\alpha_1 = }{[Z_{\ns}]}{\text{if $r = 2$},}{[Z_{\ubal}]}{\text{if $r \geq 3$}.}
\]
As a result of the previous lemmas, $\alpha_1$ is the only class that gets excised prior to the last step of framing the kernel bundle.

Next, by framing the universal kernel $\cK$, we introduce another relation $\alpha_2$ in $A^1$.
\begin{lemma} We have that
\[
	\Z \to A^1(U^{(1)}_{\pvec{e}'}) \to A^1(U^{(0)}_{\pvec{e}'}) \to 0,
\]
where the first map sends $1$ to
\[
	 \alpha_2 = c_1(\pi_*(\cK(M))) = \sum_{i = 1}^\ell \left ((f_i+M+1)a_1^{(i)} -b_2^{(i)} \right ).
\]
\end{lemma}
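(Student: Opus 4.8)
The plan is to recognize the $GL_{r-1}$-torsor $U^{(0)}_{\pvec{e}'} \to U^{(1)}_{\pvec{e}'}$ as the bundle of frames of a rank $r-1$ vector bundle living \emph{on the base} $U^{(1)}_{\pvec{e}'}$, extract the resulting right exact sequence on $A^1$, and then identify the relevant first Chern class with $\alpha_2$ via the sequence $0 \to \cK \to \cE \to \cO(D) \to 0$ and Lemma~\ref{lem:GRRCalc}.

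First I would descend the torsor. On $U^{(1)}_{\pvec{e}'} \times \bP^1$ the universal kernel $\cK$ is fiberwise isomorphic to $\cO(-M)^{\oplus r-1}$, so $\cK(M)$ is fiberwise trivial of rank $r-1$; by cohomology and base change $\pi_*\cK(M)$ is locally free of rank $r-1$ and $\pi^*\pi_*\cK(M) \to \cK(M)$ is an isomorphism. Twisting by $\cO(M)$ and using $\pi_*\pi^* = \operatorname{id}$, a $T$-point of $\Isom(\cO(-M)^{\oplus r-1}, \cK)$ is the same datum as an isomorphism $\cO_T^{\oplus r-1} \xrightarrow{\sim} (\pi_*\cK(M))|_T$, whence $U^{(0)}_{\pvec{e}'} \cong \Isom_{U^{(1)}_{\pvec{e}'}}(\cO^{\oplus r-1}, \pi_*\cK(M))$ is the frame bundle of $\pi_*\cK(M)$. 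For the frame bundle of a rank $n$ vector bundle $\cF$ on an integral stack $Y$, pulling back along the classifying map $Y \to BGL_n$ identifies $A^\ast$ of the total space with $A^\ast(Y)$ modulo the ideal $(c_1(\cF), \dots, c_n(\cF))$ — this is the precise meaning of the slogan in the text that framing a bundle sets its Chern classes to zero — and since $A^0(Y) = \Z$ and $A^{<0}(Y) = 0$, the degree one part of that ideal is exactly $\Z \cdot c_1(\cF)$. So $\Z \xrightarrow{\,c_1(\pi_*\cK(M))\,} A^1(U^{(1)}_{\pvec{e}'}) \to A^1(U^{(0)}_{\pvec{e}'}) \to 0$ is exact.

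It then remains to compute $c_1(\pi_*\cK(M))$. Twisting $0 \to \cK \to \cE \to \cO(D) \to 0$ on $U^{(1)}_{\pvec{e}'} \times \bP^1$ by $\cO(M)$ and pushing forward, the hypothesis $\pvec{e}' < \vec{e}$ forces every fiberwise summand of $\cE$ to have degree $\geq -M$, so $R^1\pi_*$ of all three terms vanishes and we get $0 \to \pi_*\cK(M) \to \pi_*\cE(M) \to \pi_*\cO(D+M) \to 0$ with $\pi_*\cO(D+M)$ trivial; hence $c_1(\pi_*\cK(M)) = c_1(\pi_*\cE(M))$. Restricting to the dense preimage of $\bZ_{\vec{e}}$, where the flag of quotients filters $\cE(M)$ with graded pieces $\cN_i(M)$, the vanishing $R^1\pi_*\cN_i(M) = 0$ gives a filtration of $\pi_*\cE(M)$ with graded pieces $\pi_*\cN_i(M)$, so $c_1(\pi_*\cE(M)) = \sum_i c_1(\pi_*\cN_i(M))$; by Lemma~\ref{lem:GRRCalc} applied with the twist $f_i + M$ (noting $\cN_i(M) = \cN_i(-f_i + (f_i+M))$ and $R\pi_* = \pi_*$), each summand equals $(f_i+M+1)a_1^{(i)} - b_2^{(i)}$, and the total is $\alpha_2$. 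This identity, verified over $\bZ_{\vec{e}}$, holds on $U^{(1)}_{\pvec{e}'}$ because the classes involved are pulled back from $\Splitt^{\circ}$ and the restriction $A^1(\Splitt^{\circ}) \to A^1(\bZ_{\vec{e}})$ is an isomorphism, the complementary strata having codimension $\geq 2$.

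The only step with genuine content is the descent: recognizing the $GL_{r-1}$-torsor, which is a priori constructed over $U^{(1)}_{\pvec{e}'} \times \bP^1$, as the frame bundle of the pushforward bundle on $U^{(1)}_{\pvec{e}'}$, via cohomology and base change. Once that is in hand the exact sequence is formal, and the Chern class computation is a short bookkeeping reduction to Lemma~\ref{lem:GRRCalc}.
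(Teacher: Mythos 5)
Your proof is correct and follows essentially the same route as the paper: identify $\cK \cong \pi^*(\pi_*\cK(M))(-M)$ so that the torsor is the frame bundle of the rank $r-1$ bundle $\pi_*\cK(M)$ on the base, conclude that the kernel of restriction on $A^1$ is generated by $c_1(\pi_*\cK(M))$, and compute that class via $0 \to \cK \to \cE \to \cO(D) \to 0$, the filtration by the $\cN_i$, and Lemma~\ref{lem:GRRCalc}. The only cosmetic difference is that the paper makes the ``framing kills $c_1$'' step concrete by realizing $U^{(0)}_{\pvec{e}'}$ as the complement of the determinant divisor (a section of $\pi^*\wedge^{r-1}(\pi_*\cK(M))^{\vee}$) inside the total space of $\pi_*\cHom(\cK,\cO(-M)^{\oplus(r-1)})$ and applying excision, whereas you invoke the general $A^*(Y)/(c_1,\dots,c_n)$ description of frame bundles.
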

\begin{proof} 
Recall that forming $\Isom(\cK, \cO(-M)^{\oplus (r-1)})$ is the same as first taking $\pi_* \Hom(\cK, \cO(-M)^{\oplus (r-1)})$, and then taking the open corresponding to isomorphisms. At this stage of the construction, $\cK$ is fiberwise isomorphic to $\cO(-M)^{\oplus (r-1)}$, so $\cK \cong \pi^* (\pi_* \cK(M))(-M)$. The closed locus which one excises is cut out by the determinant, which can be understood as a section of $\wedge^{r-1} \cK^{\vee} \otimes \wedge^{r-1} \cO(-M)^{\oplus (r-1)} = \pi^* \wedge^{r-1}(\pi_* \cK(M))^{\vee}$, so the locus in $\pi_* \Hom(\cK, \cO(-M)^{\oplus (r-1)})$ corresponding to maps which are not isomorphisms has class $-c_1(\pi_* \cK(M))$. As in our computation for $[Z_{\ubal}]$, we find that
\begin{align*}
c_1(\pi_* \cK(M)) &= c_1(\pi_* \cE(M)) \\ 
&= \sum_{i = 1}^\ell c_1(\pi_* \cN_i(M)) \\ 
&= \sum_{i = 1}^\ell \left ((f_i+M+1)a_1^{(i)} -b_2^{(i)} \right ).
\end{align*}
\end{proof}

\subsection{The Class of the Canonical Module}
We give a compact description of the normal bundle for the smooth closed substack $\bZ_{\vec{e}}$ in $\cB_{r,d}^{\circ} = \cB_{r,d} \setminus \cup_{\vec{f} < \vec{e}, \vec{f} \neq \pvec{e}^{(i)}} \mathbf{\Sigma}_{\vec{f}}$ using filtered $\Ext$ and relative analogs. We refer to \cite[Section V.2.2.2]{Illusie} for more details on the functors $R\Hom_{\pm}(-, -)$, $R\cHom_{\pm}(-, -)$.
\begin{proposition} \label{prop:normalBundle}
The normal bundle of $\bZ_{\vec{e}}$ in $\cB_{r,d}^{\circ} = \cB_{r,d} \setminus \cup_{\vec{f} < \vec{e}, \vec{f} \neq \pvec{e}^{(i)}} \mathbf{\Sigma}_{\vec{f}}$ is $R^1 \pi_* \cEnd_+(\cE)$, which is filtered with associated graded isomorphic to $\oplus_{1 \leq j < i \leq \ell} R^1 \pi_* \cHom (\cN_i, \cN_j)$. In particular, for all smooth maps $U \to \cB_{r,d}^{\circ}$, there is an exact sequence
\[
	0 \to \cT_{\bZ_{\vec{e}} \times_{\cB_{r,d}^{\circ}} U} \to \cT_U \to R^1 \pi_* \cEnd_+(\cE) \to 0,
\]
where $\cT_{(-)}$ denotes the tangent bundle of the specified space.
\end{proposition}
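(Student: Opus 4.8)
The plan is to compute the normal bundle of $\bZ_{\vec{e}}$ inside $\cB_{r,d}^{\circ}$ by deformation theory, and then obtain the stated exact sequence for an arbitrary smooth cover $U$ by flat base change. Since the resolution $\SplittNye \to \Splitt$ is an isomorphism over $\bZ_{\vec{e}}$ (recalled in Section \ref{sub:codimension_one_chow_group_of_splitt}), the closed substack $\bZ_{\vec{e}} \subseteq \cB_{r,d}^{\circ}$ is identified with an open substack of $\SplittNye$ (in particular $\bZ_{\vec{e}}$ is smooth), and the closed immersion $\bZ_{\vec{e}} \hookrightarrow \cB_{r,d}^{\circ}$ becomes the morphism that forgets the canonical flag $\cE = \cQ_0 \twoheadrightarrow \cdots \twoheadrightarrow \cQ_{\ell-1}$. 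Write $\cF_{\bullet}$ for the associated filtration of $\cE$ by subbundles on $\bZ_{\vec{e}} \times \bP^1$, whose successive quotients are the $\cN_i$; let $\cEnd_F(\cE) \subseteq \cEnd(\cE)$ be the subsheaf of endomorphisms preserving $\cF_{\bullet}$, and $\cEnd_+(\cE) := \cEnd(\cE)/\cEnd_F(\cE)$. Because $\cE$ and the $\cF_a$ are locally free, $\cEnd_+(\cE)$ is the honest quotient sheaf (the $R\cHom_+$ of \cite[Section V.2.2.2]{Illusie}), and the filtration of $\cEnd(\cE)$ induced by $\cF_{\bullet}$ makes it filtered with associated graded $\bigoplus_{1 \le j < i \le \ell} \cHom(\cN_i, \cN_j)$.

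The first and main step is to identify the tangent complexes. Using the deformation theory of vector bundles on $\bP^1$ and of bundles equipped with a filtration — equivalently, the cotangent complexes of $\cB_{r,d}$ and of $\SplittNye$ — I would show $\cT_{\cB_{r,d}^{\circ}} \simeq R\pi_* \cEnd(\cE)[1]$ and $\cT_{\bZ_{\vec{e}}} \simeq R\pi_* \cEnd_F(\cE)[1]$, with the comparison morphism induced by $\cEnd_F(\cE) \hookrightarrow \cEnd(\cE)$. (Alternatively, one can realize $\SplittNye \to \cB_{r,d}$ as a tower of open substacks of relative Quot bundles and assemble the relative obstruction theories step by step; over $\bP^1$ all relevant complexes live in degrees $\le 1$, so there is no obstruction-theoretic subtlety beyond bookkeeping.) The cofiber of $\cT_{\bZ_{\vec{e}}} \to \cT_{\cB_{r,d}^{\circ}}|_{\bZ_{\vec{e}}}$, which is the normal sheaf of $\bZ_{\vec{e}}$ in $\cB_{r,d}^{\circ}$, is then $R\pi_*(\cEnd(\cE)/\cEnd_F(\cE))[1] = R\pi_* \cEnd_+(\cE)[1]$.

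It remains to see this is a vector bundle with the claimed filtration, and to pass to $U$. For $j < i$, the sheaf $\cHom(\cN_i, \cN_j)$ is fiberwise a direct sum of line bundles of negative degree at every point of $\bZ_{\vec{e}}$ (where a block is unbalanced, this uses the inequalities defining $\pvec{e}^{(i)}$, resp. $\pvec{e}^{(j)}$, to keep all fiberwise degrees negative), so $R^0\pi_*\cHom(\cN_i,\cN_j) = 0$; filtering $\cEnd_+(\cE)$ and using left-exactness of $\pi_*$ gives $R^0\pi_*\cEnd_+(\cE) = 0$. Hence $R\pi_*\cEnd_+(\cE)[1] = R^1\pi_*\cEnd_+(\cE)$ is concentrated in degree $0$ — a genuine vector bundle, as it must be for a closed immersion of smooth stacks — and feeding the vanishing $R^0\pi_*\cHom(\cN_i,\cN_j) = 0$ into the long exact sequences of $R\pi_*$ along the filtration of $\cEnd_+(\cE)$ (together with $R^2\pi_* = 0$ on $\bP^1$) produces the filtration of $R^1\pi_*\cEnd_+(\cE)$ with associated graded $\bigoplus_{1\le j<i\le\ell} R^1\pi_*\cHom(\cN_i,\cN_j)$. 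Finally, for $U \to \cB_{r,d}^{\circ}$ smooth, $\bZ_{\vec{e}} \times_{\cB_{r,d}^{\circ}} U$ is smooth and closed in $U$; pulling the conormal exact sequence of $\bZ_{\vec{e}} \subseteq \cB_{r,d}^{\circ}$ back along $U$, and using that the formation of $R\pi_*\cEnd_+(\cE)$ commutes with the base change, gives the asserted short exact sequence on $\bZ_{\vec{e}} \times_{\cB_{r,d}^{\circ}} U$, with $\cE$ now the pulled-back universal bundle on $U \times \bP^1$.

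I expect the deformation-theoretic identification $\cT_{\bZ_{\vec{e}}} \simeq R\pi_*\cEnd_F(\cE)[1]$, compatibly with $\cT_{\cB_{r,d}^{\circ}}$, to be the real work: it requires computing the cotangent complex of the stack of filtered bundles and carefully matching the sub-versus-quotient and sign conventions in Illusie's filtered $R\cHom$ formalism (this is presumably where the input acknowledged in the introduction enters). The remaining steps are then formal manipulations of $R\pi_*$ together with elementary cohomology of line bundles on $\bP^1$.
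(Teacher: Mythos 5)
Your proposal is correct and follows essentially the same route as the paper: both reduce the computation to the deformation theory of filtered bundles (the paper base-changes to a scheme and cites Dr\'ezet--Le Potier's four-term exact sequence for flag Quot schemes, which is exactly the identification $\cT_{\bZ_{\vec{e}}} \simeq R\pi_*\cEnd_F(\cE)[1]$ compatible with $\cT_{\cB_{r,d}^\circ} \simeq R\pi_*\cEnd(\cE)[1]$ that you defer as ``the real work''), and both then use the vanishing $\pi_*\cHom(\cN_i,\cN_j)=0$ for $j<i$ to conclude that the cofiber is the vector bundle $R^1\pi_*\cEnd_+(\cE)$ with the stated filtration. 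Your degree count for that vanishing is valid on $\bZ_{\vec{e}}$ precisely because each point lies in at most one codimension-one stratum, so at most one block is unbalanced at a time; the paper instead deduces $\Ext^0_+(\cE_b,\cE_b)=0$ from $\SplittNye\to\Splitt$ being an isomorphism over $\bZ_{\vec{e}}$, which is the same fact.
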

\begin{proof}
Let $\rho: \SplittNye \to \Splitt$ be the resolution of singularities recalled at the beginning of Section \ref{sub:codimension_one_chow_group_of_splitt}. Thus, the first map in 
\[
	0 \to \cT_{\bZ_{\vec{e}}} \to \cT_{\cB_{r,d}^{\circ}}|_{\bZ_{\vec{e}}} \to \cN_{\bZ_{\vec{e}} \subset \cB_{r,d}^{\circ}} \to 0
\]
can be naturally identified with the natural map $\cT_{\SplittNye} \to \rho^* \cT_{\cB_{r,d}^{\circ}}$, restricted to $\rho^{-1}\bZ_{\vec{e}}$. Let us consider this situation after the base change along a smooth map $B \to \cB_{r,d}$, where $B$ is a scheme. Let $\cE$ be the vector bundle on $B \times \bP^1$ inducing the map $B \to \cB_{r,d}$, and let $Z_{\vec{e}} \subset U \subset B$ be the base change of the stacks $\mathbf{Z}_{\vec{e}} \subset \cB_{r,d}^{\circ} \subset \cB_{r,d}$. We would like to show that the normal bundle of $Z_{\vec{e}}$ in $U$ is $R^1 \pi_* \cEnd_+ \cE$, where $\cEnd_+(\cE)$ is taken with respect to the filtration on $\cE$ induced by 
$\SplittNye$.

Recall that the Kodaira-Spencer map $\cT_B \to R^1 \pi_* \cEnd(\cE)$ exists globally. Since $B \to \cB_{r,d}$ is smooth, this map is surjective. Moreover, the natural surjection $\cEnd(\cE) \to \cEnd_+(\cE)$ induces a surjection $R^1 \pi_* \cEnd(\cE) \to R^1 \pi_* \cEnd_+(\cE)$. The composition of these two surjections gives rise to a surjective map $\cT_B \to R^1 \pi_* \cEnd_+ \cE$. We claim that the composition 
\begin{equation} \label{eq: conormalExact}
	0 \to \cT_{Z_{\vec{e}}} \to \cT_U|_{Z_{\vec{e}}} \to R^1 \pi_* \cEnd_+ \cE \to 0
\end{equation}
is exact in the middle. 

This follows from Proposition 1.5 in \cite{Drezet-LePotier}. The content of their proposition is the following. Suppose that $\cE$ is a vector bundle on $B \times X$ where $X$ is a curve and the Kodaira-Spencer map $T_b B \to \Ext^1_X(\cE_b, \cE_b)$ is surjective for all $b \in B$. Then whenever we form a flag Quot construction associated to such $\cE$, then there is in general an exact sequence
\begin{equation} \label{eq:drezet-le-potier-seq}
	0 \to \Ext^0_+(\cE_b, \cE_b) \to T_t \FQuot(\cE) \to T_b B \xrightarrow{\omega_+} \Ext^1_+(\cE_b, \cE_b) \to 0,
\end{equation}
where $t \in \FQuot(\cE)$ is a point mapping to $b \in B$, corresponding to the data of a filtration on $\cE_b$. The $\Ext^i_+(\cE_b, \cE_b)$'s in the sequence are taken with respect to the filtration given by $t$, and $\omega_+$ is the composition of the Kodaira-Spencer map $T_b B \to \Ext^1(\cE_b, \cE_b)$ and the natural map $\Ext^1(\cE_b, \cE_b) \to \Ext^1_+(\cE_b, \cE_b)$. 

Applied to our situation, we can take the flag Quot construction which corresponds to base-changing $\SplittNye$ to $B$, which is an isomorphism over $Z_{\vec{e}}$. We know that we must have $\Ext^0_+(\cE_b, \cE_b) = 0$ for all $b \in Z_{\vec{e}}$ since $\SplittNye \to \Splitt$ is an isomorphism over $\bZ_{\vec{e}}$. This fact can be seen independently by using a spectral sequence to compute $\Ext^0_+$. The associated graded pieces arising from the filtration on $\cE$ are just the $\cN_i$'s, which are vector bundles. This implies that over $Z_{\vec{e}}$, $R\cHom_+(\cE, \cE) \cong \cHom_+(\cE, \cE)$. Since $R\Hom_+ = R\Gamma \circ R\cHom_+$ (\cite{Illusie} V.2.2.10), this shows that $(R^1 \pi_* \cEnd_+ \cE)_b \cong H^1(\bP^1, \cEnd_+(\cE_b, \cE_b)) \cong \Ext^1_+(\cE_b, \cE_b)$. Thus the exactness of \eqref{eq:drezet-le-potier-seq} shows that the base change of \eqref{eq: conormalExact} to any closed point $b \in B$ is exact in the middle.

Lastly, since $\cEnd_+(\cE)$ is filtered with associated graded isomorphic to $\oplus_{1\leq j < i \leq \ell} \cHom(\cN_i, \cN_j)$, and since $\pi_* \cHom(\cN_i, \cN_j) = 0$ for all $1\leq j < i \leq \ell$, the conormal bundle $R^1 \pi_* \cEnd_+(\cE)$ is filtered with associated graded isomorphic to $\oplus_{1 \leq j < i \leq \ell} R^1 \pi_* \cHom (\cN_i, \cN_j)$, as claimed.
\end{proof}

Even though $\Splitt$ is not smooth in general, if we only care about the class of the canonical module, we may excise codimension $\geq 2$ closed strata and work over a smooth open, where we have an explicit understanding of the normal bundle as given by the proposition above.

\begin{theorem} \label{prop:classCanonical} Let $X$ be a scheme, and let $\cE$ be a rank $r$, fiberwise degree $d$ vector bundle on $X \times \bP^1$, inducing a smooth map $X \to \cB_{r,d}$ (so that $X$ is necessarily smooth). Let $\vec{e} = (f_1^{s_1}, \dots, f_\ell^{s_\ell})$, where $f_1 < \cdots < f_\ell$, and let $\splitt(X)$ denote the splitting locus corresponding to $\vec{e}$ in $X$. Write $\delta_i = \sum_{j < i} s_j - \sum_{j > i} s_j$. Then
\[
	[\omega_{\splitt(X)}] = [\omega_X|_{\splitt(X)}] + \sum_{1 \leq i \leq \ell}\left ( (\deg \vec{e} - rf_i + \delta_i)a_1^{(i)} + (r-s_i)b_2^{(i)} \right).
\] 
\end{theorem}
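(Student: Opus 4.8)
The plan is to reduce to the smooth open $\mathbf{Z}_{\vec e} \times_{\cB_{r,d}^\circ} X$, where the normal bundle is explicitly understood via Proposition~\ref{prop:normalBundle}, and then apply adjunction. Since removing closed substrata of codimension $\geq 2$ does not affect $A^1$ or the class of a reflexive (in particular rank-one) sheaf, and since $\omega_{\splitt(X)}$ agrees (up to reflexive hull) with the dualizing sheaf on this open, it suffices to compute $[\omega_{Z_{\vec e}(X)}]$ where $Z_{\vec e}(X) = \mathbf{Z}_{\vec e} \times_{\cB_{r,d}^\circ} X$. By Proposition~\ref{prop:normalBundle}, there is an exact sequence of vector bundles
\[
	0 \to \cT_{Z_{\vec e}(X)} \to \cT_X|_{Z_{\vec e}(X)} \to R^1\pi_* \cEnd_+(\cE) \to 0,
\]
so taking determinants gives the adjunction formula $[\omega_{Z_{\vec e}(X)}] = [\omega_X|_{Z_{\vec e}(X)}] + c_1\!\left(R^1\pi_* \cEnd_+(\cE)\right)$, where I have used that the normal bundle is the quotient and so $c_1$ of the conormal bundle appears with the appropriate sign. (Here $[\omega_X] = c_1(\det \cT_X^\vee) = -c_1(\cT_X)$, but since $\omega_X$ is a line bundle this is simply a notational matter; the content is that the canonical class of the subscheme is the canonical class of the ambient plus $c_1$ of the normal bundle.)

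Next I would compute $c_1\!\left(R^1\pi_* \cEnd_+(\cE)\right)$. Again by Proposition~\ref{prop:normalBundle}, this bundle is filtered with associated graded $\bigoplus_{1 \leq j < i \leq \ell} R^1\pi_* \cHom(\cN_i, \cN_j)$, so its first Chern class is the sum $\sum_{1 \leq j < i \leq \ell} c_1\!\left(R^1\pi_* \cHom(\cN_i, \cN_j)\right)$. To evaluate each summand, I would use Lemma~\ref{lem:GRRCalc}: on $\mathbf{Z}_{\vec e} \times \bP^1$ we have $\cN_i(-f_i)$ with $c_1 = \pi^* a_1^{(i)} + \pi^* b_1^{(i)} z = \pi^* a_1^{(i)}$ (since $b_1^{(i)} = 0$) and $c_2 = \pi^* a_2^{(i)} + \pi^* b_2^{(i)} z$, and $\cN_j(-f_j)$ similarly. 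Writing $\cHom(\cN_i, \cN_j) = \cN_i^\vee \otimes \cN_j$, which has rank $s_i s_j$ and fiberwise degree $s_i s_j (f_j - f_i)$, I would run Grothendieck--Riemann--Roch exactly as in the proof of Lemma~\ref{lem:GRRCalc}, now for a rank-$s_is_j$ bundle rather than a line-twisted $\cN_i$. The Chern character of $\cN_i^\vee(f_i) \otimes \cN_j(-f_j)$ in degrees $\leq 1$ can be read off from the $a_1, b_2$ data of each factor: $\ch_0 = s_i s_j$, $\ch_1 = \pi^*(s_j a_1^{(j)} - s_i a_1^{(i)})$ plus a multiple of $z$ coming from the twist, and $\ch_2$ involving $b_2^{(i)}, b_2^{(j)}$ and the $a_1$'s. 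Multiplying by the relative Todd class $1 + z$ and applying $\pi_*$ yields $c_1\!\left(R^1\pi_*\cHom(\cN_i,\cN_j)\right) = c_1(\pi_*\cHom(\cN_i,\cN_j)) - c_1(R^0\pi_*\cdots)$; since $\pi_*\cHom(\cN_i,\cN_j) = 0$ for $i > j$ (the bundles $\cN_i$ are "decreasing"), only the $R^1$ term survives and equals (up to sign) the GRR output. The expected answer for each term is an expression linear in $a_1^{(i)}, a_1^{(j)}, b_2^{(i)}, b_2^{(j)}$ with coefficients built from $s_i, s_j, f_i, f_j$.

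Finally I would sum over all pairs $1 \leq j < i \leq \ell$ and collect coefficients. For a fixed index $k$, the class $a_1^{(k)}$ appears in every term $\cHom(\cN_i, \cN_k)$ with $i > k$ and every term $\cHom(\cN_k, \cN_j)$ with $j < k$; summing the degree and rank contributions should telescope into the coefficient $\deg \vec e - r f_k + \delta_k$, where the $\delta_k = \sum_{j<k} s_j - \sum_{j>k} s_j$ bookkeeps the asymmetry between "$k$ on the left" and "$k$ on the right" in the Hom pairs. Similarly the class $b_2^{(k)}$ picks up a contribution from each pair involving index $k$, of which there are $r - s_k$ counted with the right multiplicities (one for each of the $r - s_k$ summands $\cN_i$ with $i \neq k$), giving coefficient $r - s_k$. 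The main obstacle I anticipate is purely the combinatorics of this last bookkeeping step: getting the GRR computation for $\cHom(\cN_i, \cN_j)$ exactly right (in particular tracking which twist by $f_i, f_j$ produces which multiple of the hyperplane class $z$, and the correct signs from dualizing $\cN_i$), and then verifying that the double sum reorganizes into precisely $\deg\vec e - rf_k + \delta_k$ and $r - s_k$. Everything else is formal: adjunction, excision of high-codimension strata, and reflexivity of $\omega$.
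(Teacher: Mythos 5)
Your proposal is correct and follows the same skeleton as the paper: excise codimension $\geq 2$ strata, apply adjunction to the exact sequence of Proposition \ref{prop:normalBundle}, use the filtration to reduce to $\sum_{j<i} c_1(R^1\pi_*\cHom(\cN_i,\cN_j))$, and then do the coefficient bookkeeping (which does telescope to $\deg\vec{e}-rf_k+\delta_k$ and $r-s_k$ exactly as you predict). The one place you diverge is the evaluation of each $c_1(R^1\pi_*\cHom(\cN_i,\cN_j))$: you run Grothendieck--Riemann--Roch directly on the rank-$s_is_j$ tensor product, whereas the paper instead excises $\Sigma_{\pvec{e}^{(i)}}$ (resp.\ $\Sigma_{\pvec{e}^{(j)}}$) so that one factor becomes a pullback $\pi^*\cM(f)$, reducing each computation to Lemma \ref{lem:GRRCalc} for a single twisted $\cN$, and then recombines the two answers using freeness of the basis. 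Your direct route works: in degree two the cross term $\pi^*a_1^{(i)}\cdot\pi^*a_1^{(j)}$ dies under $\pi_*$, and one gets $(f_i-f_j-1)(s_ia_1^{(j)}-s_ja_1^{(i)})+s_ib_2^{(j)}+s_jb_2^{(i)}$, matching the paper; what the paper's trick buys is never having to expand the second Chern character of a tensor product, at the cost of the two-excision/recombination argument. Two small things to fix when writing it up: your $\operatorname{ch}_1$ of $\cN_i^{\vee}(f_i)\otimes\cN_j(-f_j)$ should read $\pi^*(s_i a_1^{(j)}-s_j a_1^{(i)})$ (rank of the \emph{other} factor multiplies each $c_1$), not $s_j a_1^{(j)}-s_i a_1^{(i)}$; and the vanishing $\pi_*\cHom(\cN_i,\cN_j)=0$ used to identify the GRR output with $-c_1(R^1\pi_*)$ needs the observation that on $\bZ_{\vec{e}}$ the loci where $\cN_i$ and $\cN_j$ are simultaneously unbalanced have already been excised, so the fiberwise degrees of $\cHom(\cN_i,\cN_j)$ stay negative.
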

\begin{proof}
Let $X^{\circ} = X \setminus \cup_{\vec{f} < \vec{e}, \vec{f} \neq \pvec{e}^{(i)}} \Sigma_{\vec{f}}$ and let $Z_{\vec{e}} = \bZ_{\vec{e}} \times_{\cB_{r,d}^{\circ}} X^{\circ}$. 
Using Proposition \ref{prop:normalBundle}, we find that
\begin{align*}
[\omega_{\splitt}] &= [\omega_{Z_{\vec{e}}}] \\ 
&= -c_1(\cT_{Z_{\vec{e}}}) \\ 
&= -c_1(\cT_{X^{\circ}}|_{Z_{\vec{e}}}) + c_1(\cN_{Z_{\vec{e}} \subset X^{\circ}}) \\ 
&= 
[\omega_X|_{\splitt}] + 
\sum_{1 \leq j < i \leq \ell} c_1 (R^1 \pi_*\cHom(\cN_i, \cN_j)).
\end{align*}

We claim that when $1 \leq j < i \leq \ell$,
\[
	c_1(R^1 \pi_*\cHom(\cN_i, \cN_j)) = (f_i-f_j-1)(s_i a_1^{(j)} - s_j a_1^{(i)}) + s_i b_2^{(j)} + s_j b_2^{(i)}, 
\]
which would yield the desired result.

First, we can consider the class of this vector bundle after we excise $\Sigma_{\pvec{e}^{(i)}}$. Then $\cN_i$ is perfectly balanced, and we have $\cN_i = \pi^* \cM_i(f_i)$, where we define $\cM_i = \pi_* \cN_i(-f_i)$. 

Now,
\begin{align*}
R^1 \pi_* \cHom(\cN_i, \cN_j) &= 
R^1 \pi_* \cHom(\pi^* \cM_i (f_i), \cN_j) \\ 
&= \cM_i^{\vee} \otimes R^1\pi_* \cN_j(-f_i).
\end{align*}
By Lemma \ref{lem:GRRCalc}, we have $c_1(R^1 \pi_* \cN_j(-f_i)) = -(f_j-f_i+1) a_1^{(j)} + b_2^{(j)}$, and $c_1(\cM_i^{\vee}) = -a_1^{(i)}$.
So if we excise $\Sigma_{\pvec{e}^{(i)}}$, then we see that modulo $b_2^{(i)}$,
\begin{align*}
	c_1 R^1 \pi_* \cHom(\cN_i, \cN_j) & = s_i c_1(R^1 \pi_* \cN_j(-f_i)) 
	+ s_j(f_i-f_j-1) c_1(\cM_i^{\vee}) \\ 
	& = 
	-s_i(f_j-f_i+1) a_1^{(j)} - s_j(f_i-f_j-1)a_1^{(i)} + s_ib_2^{(j)} \\ 
	&= (f_i-f_j-1)(s_i a_1^{(j)}-s_j a_1^{(i)}) + s_ib_2^{(j)}.
\end{align*}

Similarly, if we excise $\Sigma_{\pvec{e}^{(j)}}$, then 
\[
	R^1 \pi_* \cHom(\cN_i, \cN_j) = \cM_j \otimes R^1 \pi_* (\cN_i^{\vee}(f_j)) = \cM_j \otimes \pi_* (\cN_i(-f_j-2))^{\vee}.
\]
Again by Lemma \ref{lem:GRRCalc}, $c_1(\cM_j) = a_1^{(j)}$ and $c_1(\pi_* (\cN_i(-f_j-2))^{\vee}) = -(f_i-f_j-1)a_1^{(i)} + b_2^{(i)}$. So modulo $b_2^{(j)}$,
\[
	c_1 R^1 \pi_* \cHom(\cN_i, \cN_j)  
	= (f_i-f_j-1)(s_i a_1^{(j)}-s_j a_1^{(i)}) + s_jb_2^{(i)}.
\]
Since $b_2^{(i)}$ and $b_2^{(j)}$ are part of a free basis, the two calculations above together shows the desired formula.
\end{proof}

\subsection{Proof of Theorem \ref{thm:main}}
By Theorem \ref{prop:classCanonical}, the class of the canonical module for $\splitt \subset H^1 \cEnd(\cO(\pvec{e}'))$ is 
\begin{align*}
[\omega_{\splitt}] = \sum_{1 \leq i \leq \ell}\left ( (\deg \vec{e} - rf_i + \delta_i)a_1^{(i)} + (r-s_i)b_2^{(i)} \right).
\end{align*}
Recall also that the two classes that generate the kernel of $A^1(\Splitt) \to A^1(\splitt)$ are 
\begin{align*}
	\alpha_1 &= \sum_{i=1}^\ell \left (-(f_i+M)a_1^{(i)} + b_2^{(i)} \right ), \\ 
	\alpha_2 &= \sum_{i = 1}^\ell \left ((f_i+M+1)a_1^{(i)} -b_2^{(i)} \right ).
\end{align*}
For convenience, given two vectors $\vec{p}, \vec{q}$ both of length $\ell$, we use the notation $(\vec{p} \mid \vec{q})$ to denote the element $\sum_{i = 1}^\ell (p_i a_1^{(i)} + q_i b_2^{(i)})$.
The subgroup generated by $\alpha_1, \alpha_2$ is free of rank $2$, which we think of in terms of the following basis:
\begin{align*}
	(\mathbf{1}_{\vec{a}} \mid \mathbf{0} ) &= \sum_{i=1}^\ell a_1^{(i)}, \\ 
	(\vec{f} \mid -\mathbf{1}_{\vec{b}}) &= \sum_{i=1}^\ell \left ( f_ia_1^{(i)} - b_2^{(i)} \right ).
\end{align*}
We can rewrite the formula for $[\omega_{\splitt}]$ from Theorem \ref{prop:classCanonical} in terms of this basis:
\[
	[\omega_{\splitt}] = \deg \vec{e} (\mathbf{1}_{\vec{a}} \mid \mathbf{0} ) - r (\vec{f} \mid -\mathbf{1}_{\vec{b}}) + (\vec{\delta} \mid  -\vec{s}),
\]
where $\vec{\delta} = (\delta_1, \dots, \delta_{\ell})$ and $\vec{s} = (s_1, \dots, s_{\ell})$. So $[\omega_{\splitt}]$ is zero or $N$-torsion if and only if $(\vec{\delta} \mid  -\vec{s})$ is zero or $N$-torsion. We now explain exactly when the canonical class is trivial or torsion. 

Let $\vec{\delta} = (\delta_1, \dots, \delta_{\ell})$. Let $\Delta(\vec{\delta}) = (\delta_2 - \delta_1, \dots, \delta_{\ell} - \delta_{\ell-1}) = (s_1+s_2, \dots, s_{\ell-1} + s_\ell)$, and similarly let $\Delta(\vec{f}) = (f_2 - f_1, \dots, f_\ell - f_{\ell-1})$. Let $\pvec{e}^{(i_1)}, \dots, \pvec{e}^{(i_k)}$ be those splitting types which are codimension one in $\Splitt$. 

First suppose there is at least one such codimension one splitting type. Then by observing the generators, we see that if $N(\vec{\delta} \mid -\vec{s}) \in \operatorname{span} \langle \alpha_1, \alpha_2 \rangle$ for some $N \geq 1$, then we must have that $s_{i_1} = s_{i_2} = \cdots = s_{i_k} = s$ for some $s \geq 2$, and there exist some $A, B$ such that
\[
	(N\vec{\delta} \mid -Ns\mathbf{1}_{\vec{b}}) = (N\vec{\delta} \mid -N\vec{s}) = A(\mathbf{1}_{\vec{a}} \mid \mathbf{0}) + B(\vec{f} \mid -\mathbf{1}_{\vec{b}}),
\]
But then we are forced to take $B = Ns$. Looking only at the $\vec{a}$ part of the basis, we see that
\[
	N\vec{\delta} = A \cdot \mathbf{1}_{\vec{a}} + Ns \vec{f}. 
\]
Equivalently, $N\Delta(\vec{\delta}) = Ns \Delta(\vec{f})$.
But if this is true, then we also have that $\Delta(\vec{\delta}) = s \Delta(\vec{f})$. Let $i \in \{i_1, \dots, i_k\}$ so that $s_i = s$. Then since $s_{i-1} + s_i = \Delta(\vec{\delta})_{i-1}$ is divisible by $s$, $s_{i-1}$ must also be divisble by $s$. Similarly, since $s_{i} + s_{i+1} = \Delta(\vec{\delta})_{i}$ is divisible by $s$, $s_{i+1}$ must be divisible by $s$. Hence we can write $s_{i} = k_i s$ for each $1 \leq i \leq \ell$, where $k_i > 0$. The condition tells us that $f_{i+1}-f_{i} = \Delta(\vec{f})_{i} = \frac{1}{s}\Delta(\vec{\delta})_{i} = k_i + k_{i+1}  \geq 2$. Moreover, $s_i = k_i s \geq s \geq 2$. So actually every $1 \leq i \leq \ell$ contributes a codimension $1$ class. It follows that $s_i = s$ for all $1 \leq i \leq \ell$, and $f_{i+1}-f_i = 2$ for all $1 \leq i < \ell$. In sum, we have shown that if there exists a codimension one splitting type $\pvec{e}^{(i)}$ and $\Splitt$ is $N$-Gorenstein, then $\vec{e}$ is a block arithmetic progression of difference $2$ and $\Splitt$ is actually Gorenstein.

Now suppose that there are no codimension $1$ splitting loci in $\Splitt$. In this case, $\Splitt$ is $N$-Gorenstein if and only if $N \Delta(\vec{\delta}) = B \Delta(\vec{f})$ for some $B \geq 1$. For each $1 \leq i \leq \ell$, there are two possibilities:
\begin{enumerate}
	\item $f_{i+1} = f_i+1$ or $f_{i-1} = f_i-1$, and $s_i$ can be anything;
	\item $f_i \geq f_{i-1} + 2$ and $f_{i} \leq f_{i+1}-2$, and $s_i = 1$.
\end{enumerate}

When only the first possibility occurs, we claim that in fact for every $1 \leq i < \ell$, $f_{i+1} = f_{i} + 1$. This is trivial if $\ell \leq 3$, so let us assume that $\ell \geq 4$. Suppose towards the contrary that $f_{i+1} - f_i \geq 2$ for some $i$, which can only happen for $2 \leq i \leq \ell-2$. Then we must have $f_{i-1} = f_i-1$ and $f_{i+2} = f_{i+1}+1$. So the relevant entries of $\Delta(\vec{f})$ and $\Delta(\vec{\delta})$ are $(\dots, 1, f_{i+1}-f_i, 1, \dots)$ and $(\dots, s_{i-1}+s_i, s_i+s_{i+1}, s_{i+1}+s_{i+2}, \dots)$. Since $\Delta(\vec{f})$ and $\Delta(\vec{\delta})$ are proportional, we must have
\begin{align*}
s_{i-1}+s_i &= s_{i+1}+s_{i+2} \\ 
s_i+s_{i+1} &= (f_{i+1}-f_i)(s_{i-1}+s_i).
\end{align*}
In particular,
\[
	2(s_i + s_{i+1}) = (f_{i+1}-f_i)(s_{i-1} + s_i + s_{i+1} + s_{i+2}),
\]
which is impossible. Thus we conclude that $\Delta(\vec{f}) = (1, 1, \dots, 1)$, and there exist $N, B$ such that $N\Delta(\vec{\delta}) = B \Delta(\vec{f})$ if and only if there exists $B \geq 1$ such that $\Delta(\vec{\delta}) = B \Delta(\vec{f})$. The latter condition is saying that $s_{i-1} + s_i = s_{i} + s_{i+1}$ for all $1 < i < \ell$. This is exactly the case where $\vec{e}$ is contiguous. In sum, if a splitting type $\vec{e}$ is $N$-Gorenstein, does not dominate any splitting type of codimension $1$, and only possibility (1) occurs, then $\Splitt$ is contiguous and it is Gorenstein.

If only the second possibility occurs, we have $\Delta(\vec{\delta}) = (2, 2, \dots, 2)$. So in this case if $\Splitt$ is $N$-Gorenstein for any $N \geq 1$, then $\vec{e}$ is an arithmetic progression. If $\vec{e}$ is an arithmetic progression with difference $t$, then $\Splitt$ is Gorenstein if and only if $t = 0,1,2$. When $t > 2$, if $t = 2t'$, then the smallest $N$ such that $\Splitt$ is $N$-Gorenstein is $N = t'$. If $t$ is odd, then the smallest $N$ such that $\Splitt$ is $N$-Gorenstein is $t$.

Lastly, we claim that the two possibilities cannot both occur for an $N$-Gorenstein splitting type $\vec{e}$ with no codimension $1$ strata. It suffices to show that it is not possible to have $i, j$ such that $|i-j| = 1$, (1) occurs at $i$ and (2) occurs at $j$. We make the argument for $j = i-1$, as the other case is entirely analogous. In this case, $\vec{e} = (\dots, f_{i-1}, f_i^{s_i}, f_{i+1}^{s_{i+1}}, \dots)$, where $f_{i+1} = f_i+1$. But then $\Delta(\vec{f})_{i-1} = f_i-f_{i-1} > 1 = \Delta(\vec{f})_i$. This implies that $1+s_i = \Delta(\vec{\delta})_{i-1} > \Delta(\vec{\delta})_i = s_i + s_{i+1}$, which is not possible. 

\printbibliography

\end{document}